\journal{Journal of Approximation Theory}
\newtheorem{theorem}{Theorem}[section]
\newtheorem{lemma}[theorem]{Lemma}
\newtheorem{definition}{Definition}[section]
\newproof{proof}{Proof}
\newcommand{\Cn}{ \mathds{C}^n }
\newcommand{\CO} {\Cn\setminus\Omega}
\newcommand{\COO} {\Cn\setminus\overline\Omega}
\newcommand{\Om}{\Omega = \left\{z\in\Cn : \rho(z)<0 \right\}}
\newcommand{\dom}{ {\partial\Omega} }
\newcommand{\OO}{\Omega_\eps\setminus\Omega}
\renewcommand{\O}{\Omega}
\newcommand{\eps}{\varepsilon}
\newcommand{\C}{\mathds{C}}
\newcommand{\R}{\mathds{R}}
\newcommand{\N}{\mathds{N}}
\newcommand{\RE}{\text{\rm{Re}}}
\newcommand{\IM}{\text{\rm{Im}}}
\newcommand{\pr}{ \text{\rm{pr}}}
\newcommand{\f}{{\mathbf{f}}}
\newcommand{\SK}{\sum\limits_{k=1}^\infty}
\newcommand{\cn}{\frac{1}{(2\pi i)^n}}
\newcommand{\dbar}{\bar\partial}
\newcommand{\intl}{\int\limits}
\newcommand{\suml}{\sum\limits}
\newcommand{\dS}[1]{\partial{\rho}(#1)\wedge(\bar\partial\partial{\rho}(#1))^{n-1}}
\newcommand{\dist}[2]{ \text{\rm{dist}} (#1,\ #2)}
\newcommand{\supp}[1]{ \text{\rm{supp}}\ {#1} }
\newcommand{\BMO}{ \text{\rm{BMO}}}
\newcommand{\scp}[2]{ \left\langle #1,\ #2  \right\rangle }
\newcommand{\V}[2]{\scp{\partial\rho(#1)}{#1 - #2}}
\newcommand{\norm}[1]{\left\lVert #1 \right\rVert}
\newcommand{\abs}[1]{\left\lvert #1 \right\rvert}
\begin{document}

\begin{frontmatter}

\title{Constructive description of Hardy-Sobolev spaces in $\Cn$}

\author[mymainaddress]{Alexander Rotkevich}
\ead{rotkevichas@gmail.com}
\address[mymainaddress]{Department of Mathematical analysis, Mathematics and Mechanics Faculty, St. Petersburg State University, 198504,  Universitetsky prospekt, 28, Peterhof, St. Petersburg, Russia}

\begin{abstract}
In this paper we study the polynomial approximations in
Hardy-Sobolev spaces on for convex domains. We use the method of
pseudoanalytical continuation to obtain the characterization of
these spaces in terms of polynomial approximations.
\end{abstract}

\begin{keyword} Hardy-Sobolev Spaces \sep polynomial approximations,
pseudoanalytical continuation \sep Cauchy-Leray-Fantappi\`{e}
integral
\MSC[2010] 32E30\sep 41A10
\end{keyword}

\end{frontmatter}

\section{Introduction\label{intro}}

The purpose of this paper is to give an alternative
characterizations of Hardy-Sobolev (see. \cite{AB88}) spaces
\begin{equation} \label{def:Hardy-Sobolev}
 H^l_p(\O) =\{f\in H(\O): \norm{f}_{H^p(\O)} + \suml_{\abs{\alpha}\leq l}\norm{\partial^{\alpha} f}_{H^p(\O)}<\infty\}
\end{equation} on strongly convex domain $\O\subset\Cn.$

We continue the research started in \cite{R13} and devoted to
description of basic spaces of holomorphic functions of several
variables in terms of polynomial approximations and pseudoanalytical
continuation. In particular, we show that for $1<p<\infty$ and
$l\geq 1$ a holomorphic on a strongly convex domain $\O$ function
$f$ is in the Hardy-Sobolev space $H^l_p(\O)$ if and only if there
exist a sequence of $2^k-$degree polynomials $P_{2^k}$ such that
\begin{equation}
 \intl_\dom d\sigma(z) \left(\SK \abs{f(z)-P_{2^k}(z)}^2 2^{2l k}
 \right)^{p/2} < \infty. \label{eq:SobSp_cond}
\end{equation}
In the one variable case this condition follows from the
characterization obtained by E.M. Dynkin \cite{D81} for Radon
domains.

The paper is divided into five sections with one appendix. In
section~\ref{notations} we give main definitions and preliminaries
of this work. Section~\ref{CLF} is devoted to the
Cauchy-Leray-Fantappi\`{e} integral formula, the polynomial
approximations and estimates of its kernel. We also define internal
and external Kor\'{a}nyi regions, the multidimensional analog of
Lusin regions. In section~\ref{PAC} we introduce the method of
pseudoanalytical continuation and three constructions of the
continuation with different estimates. We use these constructions to
obtain the characterization of Hardy-Sobolev spaces in terms of
estimates of the pseudoanalytical continuation. To prove this result
we use the special analog of the Krantz-Li area-integral
inequality~\cite{KL97} for external Kor\'{a}nyi regions established
in appendix~\ref{Area_int}. Finally, section~\ref{Poly_Sobolev}
contains the proof of characteristics~(\ref{eq:SobSp_cond}).

\section{Main notations and definitions \label{notations}}

Let $\Cn$ be the space of $n$ complex variables, $n\geq 2,$ $z =
(z_1,\ldots, z_n),\ z_j = x_j + i y_j;$
$$\partial_j f =\frac{\partial f}{\partial z_j} = \frac{1}{2}\left( \frac{\partial f}{\partial x_j} - i \frac{\partial f}{\partial y_j}\right), \quad \bar\partial_j f = \frac{\partial f}{\partial\bar{z}_j} = \frac{1}{2}\left( \frac{\partial f}{\partial x_j} + i \frac{\partial f}{\partial y_j}\right),$$

$$\partial f = \suml_{k=1}^{n} \frac{\partial f}{\partial z_k} dz_k,\quad \bar{\partial} f = \suml_{k=1}^{n} \frac{\partial f}{\partial\bar{z}_k} d\bar{z}_k,\quad df=\partial f+  \bar{\partial} f.$$

\noindent The notation
$$\scp{\partial f(z)}{w} = \suml_{k=1}^{n} \frac{\partial f(z)}{\partial z_k} w_k.$$
is used to indicate the action of $\partial f$ on the vector
$w\in\Cn,$ and $$|\bar\partial f| = \abs{\frac{\partial f}{\partial
z_1}} + \ldots+\abs{\frac{\partial f}{\partial z_n}}.$$

The euclidean distance form the point $z\in\Cn$ to the set
$D\subset\Cn$ we denote as $\dist{z}{D} = \inf\{\abs{z-w}:w\in D\}.$
Lebesgue measure in $\Cn$ we denote as $d\mu.$

For a multiindex $\alpha =
(\alpha_1,\ldots,\alpha_n)\in\mathds{N_0}^n$ we set
$\abs{\alpha}~=~\alpha_1~+~\ldots~+~\alpha_n$ and
$\alpha!~=\alpha_1!\ldots\alpha_2!,$
 also $z^\alpha =
z_1^{\alpha_1}\ldots z_n^{\alpha_n}$ and $\partial^\alpha f =
\frac{\partial^{\abs{\alpha}} f}{\partial
\bar{z}_1^{\alpha_1}\ldots\partial \bar{z}_n^{\alpha_n}}.$



Let $\Om$ be a strongly convex domain with a $C^3$-smooth defining
function. We need to consider a family of domains $$\Omega_t =
\left\{z\in\Cn : \rho(z)<t \right\}$$ that are also strongly convex
for each $|t|<\eps,$ where $\eps>0$ is small enough, that is $d^2\rho(z)$ is positive definite when $|\rho(z)|\leq\eps.$ For $z\in \O_{\eps}\setminus\O_{-\eps}$ we denote the nearest point on $\dom$ as $\pr_{\dom}(z).$ Then the mapping 
$$\pr_{\dom} : \O_{\eps}\setminus\O_{-\eps}\to \dom$$
is well defined, $C^2-$smooth on $\O_{\eps}\setminus\O$ and $|z-\pr_{\dom}(z)|=\dist{z}{\dom}.$

For $\xi\in\dom_t$ we define the complex tangent space
$$T_\xi = \left\{ z\in\Cn : \scp{\partial{\rho}(\xi)}{\xi-z} = 0  \right\}.  $$

The space of holomorphic functions we denote as $H(\O)$ and consider
the Hardy space (see~\cite{S76},~\cite{FS72}) $$H^p(\O):=\left\{f\in
H(\O):\ \norm{f}_{H^p(\O)}^p=\sup\limits_{-\eps<t<0} \intl_{\dom_t} |f(z)|^p d\sigma_t
(z) <\infty\right\},$$ where $d\sigma_t$ is induced Lebesgue measure
on the boundary of $\O_t.$ We also denote $d\sigma = d\sigma_0.$
Hardy-Sobolev spaces $H^l_p(\O)$ are defined
by~(\ref{def:Hardy-Sobolev}).

Throughout this paper we use notations $\lesssim,\ \asymp.$  We let
$f\lesssim g$ if $f\leq c g$ for some constant $c>0,$ that doesn't
depend on main arguments of functions $f$ and $g$ and usually depend
only on dimension $n$ and domain $\O.$ Also $f\asymp g$ if $c^{-1}
g\leq f\leq c g$ for some $c>1.$

\section{Cauchy-Leray-Fantappi\`{e} formula \label{CLF}}

In the context of theory of several complex variables there is no
unique reproducing formula formula, however we could use the Leray
theorem, that allows us to construct holomorphic reproducing kernels
(\cite{AYu79}, \cite{L59}, \cite{Ra86}). For convex domain $\Om$
this theorem brings us Cauchy-Leray-Fantappi\`{e} formula, and for
$f\in H^1(\Omega)$ and $z\in\O$ we have
\begin{equation} \label{eq:CLF}
 f(z) = K_\O f(z) = \cn \int\limits_{\dom} \frac{f(\xi) \partial\rho(\xi)\wedge(\bar{\partial}\partial\rho(\xi))^{n-1}}{\scp{\partial\rho(\xi)}{\xi-z}^n} = \int\limits_{\dom} f(\xi) K(\xi,z) \omega(\xi),
\end{equation}
where $\omega(\xi) = \cn
\partial\rho(\xi)\wedge(\bar{\partial}\partial\rho(\xi))^{n-1},$ and
$K(\xi,z) =\scp{\partial\rho(\xi)}{\xi-z}^{-n}.$

The $(2n-1)$-form $\omega$ defines on $\dom_t$ Leray-Levy measure
$dS,$ that is equivalent to Lebesgue surface measure $d\sigma_t$
(for details see \cite{AYu79}, \cite{LS13}, \cite{LS14}). This
allows us to identify Lebesgue, Hardy and Hardy-Sobolev spaces
defined with respect to measures $d\sigma_t$ and $dS$. Also note,
that measure $dV$ defined by the $2n$-form
$d\omega=(\partial\dbar\rho)^{n}$ is equivalent to Lebesgue measure
$d\mu$ in $\Cn.$

By \cite{R12} the integral operator $K_\O$ defines a bounded mapping
on $L^p(\dom)$ to $H^p(\Omega)$ for $1<p<\infty.$

The function $d(w,z) = \abs{\V{w}{z}}$  defines on $\dom$
quasimetric, and if $B(z,\delta) = \{w\in\dom: d(w,z)<\delta\}$ is a
quasiball with respect to $d$ then $\sigma(B(z,\delta))\asymp
\delta^n,$ see for example \cite{R12}. Therefore $\{\dom,d,\sigma\}$
is a space of homogeneous type.

Note also the crucial role in the forthcoming considerations of the
following estimate that is proved in \cite{R13}.
\begin{lemma} \label{lm:QM_est1} Let $\O$ be strongly convex, then
$$d(w,z) \asymp \rho(w) + d(\pr_{\dom}(w),z),\ w\in\CO,\ z\in\dom.$$
\end{lemma}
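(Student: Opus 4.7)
My plan is to reduce the estimate to a local Taylor expansion of the pairing $\scp{\partial\rho(w)}{w-z}$ about the boundary projection $w':=\pr_{\dom}(w)$. Since $w'$ minimizes the Euclidean distance to $w$, the displacement $\nu:=w-w'$ is a positive real multiple of the outward Euclidean normal at $w'$, which in turn is proportional to $\overline{\partial\rho(w')}$ because $\rho$ is real-valued with non-vanishing gradient. Consequently $|\nu|\asymp\rho(w)$, and the pairing $\scp{\partial\rho(w')}{\nu}$ is a real positive number of size $\asymp\rho(w)$.

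Next I split
\[
\scp{\partial\rho(w)}{w-z}=\scp{\partial\rho(w')}{\nu}+\scp{\partial\rho(w')}{w'-z}+E,
\]
with $E=\scp{\partial\rho(w)-\partial\rho(w')}{w-z}$. The $C^2$-regularity of $\rho$ gives $|E|\lesssim\rho(w)(\rho(w)+|w'-z|)$. The second main summand has modulus $d(w',z)$; its real part is nonnegative by convexity of $\O$ and, by strong convexity, satisfies $\RE\scp{\partial\rho(w')}{w'-z}\gtrsim|w'-z|^2$, which in particular yields $|w'-z|^2\lesssim d(w',z)$. Combined with boundedness of $\O$, this gives the uniform bound $|E|\lesssim\rho(w)+d(w',z)$, so the upper inequality $d(w,z)\lesssim\rho(w)+d(w',z)$ follows directly from the triangle inequality.

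For the reverse inequality I would split into two regimes. If $d(w',z)\leq K\rho(w)$ for some large constant $K$, then $|w'-z|\lesssim(K\rho(w))^{1/2}$, so $|E|=o(\rho(w))$ as $\rho(w)\to 0$; since both main terms have nonnegative real parts and the first is $\asymp\rho(w)$, one extracts $d(w,z)\geq\RE\scp{\partial\rho(w)}{w-z}\gtrsim\rho(w)\asymp\rho(w)+d(w',z)$. If $d(w',z)>K\rho(w)$, then $|E|\lesssim\rho(w)(1+|w'-z|)\lesssim d(w',z)/K$ using $|w'-z|\leq\diam{\O}$, so the reverse triangle inequality in $\C$ yields $d(w,z)\geq d(w',z)-O(d(w',z)/K)\geq d(w',z)/2$ for $K$ large enough. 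The regime where $\rho(w)$ is bounded away from zero is routine by compactness on the relevant tubular neighborhood of $\dom$.

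The main obstacle is the second case of the lower bound: one cannot recover $d(w',z)$ merely by inspecting $\RE\scp{\partial\rho(w')}{w'-z}$, because when $w'-z$ is nearly complex-tangential the real part is only of order $|w'-z|^2$ while $d(w',z)$ may be dominated by its imaginary part. The reverse-triangle-inequality argument sidesteps this, but only works if $|E|$ is strictly smaller than the main term, and that is precisely where the strong-convexity estimate $|w'-z|^2\lesssim d(w',z)$ enters essentially.
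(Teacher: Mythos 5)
Your argument is correct in substance, but note that the paper itself does not prove this lemma at all: it is quoted with a reference to \cite{R13}, so there is no internal proof to compare against. Your write-up is therefore a legitimate self-contained substitute, and it follows the route one would expect: write $w-\pr_{\dom}(w)$ as a positive multiple of the outward normal, so that $\scp{\partial\rho(w')}{w-w'}$ is a positive real number $\asymp\rho(w)$, expand $\scp{\partial\rho(w)}{w-z}$ about $w'=\pr_{\dom}(w)$ with a $C^2$ error $E$, and exploit that both main terms have nonnegative real part together with the strong-convexity bound $\RE\scp{\partial\rho(w')}{w'-z}\gtrsim\abs{w'-z}^2$ (equivalently $\abs{w'-z}^2\lesssim d(w',z)$) to absorb $E$. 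The upper bound and both regimes of the lower bound ($d(w',z)\leq K\rho(w)$, where the positive normal term dominates, and $d(w',z)>K\rho(w)$, where $E$ and the normal term are both $O(d(w',z)/K)$) are handled correctly, and you rightly identify that the only delicate point is the near-tangential case, which is exactly where $\abs{w'-z}^2\lesssim d(w',z)$ is needed.

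Two small points of care. First, the quantitative facts you use ($\abs{w-w'}\asymp\rho(w)$, $C^2$-regularity of $\pr_{\dom}$, smallness of $E$) hold only in the collar $\O_\eps\setminus\O$ where $d^2\rho$ is controlled; as stated for all $w\in\CO$ the equivalence requires some global hypothesis on $\rho$ (e.g.\ convexity of $\rho$ outside $\O$ with nonvanishing gradient), since for an arbitrary defining function $\partial\rho$ could degenerate far from $\dom$. Your closing compactness remark covers the collar with $\rho(w)$ bounded away from zero, which is the regime in which the paper actually uses the lemma, but it is worth stating the restriction explicitly. Second, in the regime $d(w',z)\leq K\rho(w)$ the threshold below which $\abs{E}=o(\rho(w))$ depends on $K$; this is harmless but should be said, since the constants in the final equivalence then depend on $K$ and on that threshold.
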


\subsection{The polynomial approximation of Cauchy-Leray-Fantappi\'{e} kernel\label{CLF_approx}}
In lemma~\ref{lm:CLF_approx} here we construct a polynomial
approximations of Cauchy-Leray-Fantappi\'{e} kernel based on theorem
by V.K. Dzyadyk about estimates of Cauchy kernel on domains on
complex plane (theorem~1 in part~1 of section~7 in~\cite{Dz77}). The
approximation is choosed similarly to \cite{Sh89}.
This construction allows us in theorem~\ref{thm:Poly_Sobolev} to get
polynomials that approximate holomorphic function with desired
speed.

\begin{lemma}
Let $\O$ be a strongly convex domain with $0\in\O,$ then for every
$\xi\in\OO$ the value of $\lambda =
\frac{\scp{\partial\rho(\xi)}{z}}{\scp{\partial\rho(\xi)}{\xi}}$ for
$z\in\O$ lies in domain $L(t),$ bounded by the bigger arc of the
circle $\abs{\lambda}=R=R(\O)$ and the chord $\{\lambda\in\C :
\lambda = 1 + e^{it}s,\ s\in\R,\ \abs{\lambda}\leq R \},$ where $t =
\frac{\pi}{2} - \arg(\scp{\partial\rho(\xi)}{\xi}).$
\end{lemma}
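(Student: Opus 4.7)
The plan is to rewrite
$$
\lambda - 1 = \frac{\scp{\partial\rho(\xi)}{z-\xi}}{\scp{\partial\rho(\xi)}{\xi}}
$$
and verify the two defining properties of $L(t)$ --- the modulus bound $|\lambda|\leq R$ and membership in the correct half-plane of the chord --- separately. Both reduce to the convexity of $\rho$ as a real-valued function: since $du = \partial u + \bar\partial u$ yields $du(v) = 2\RE\scp{\partial u}{v}$ for real $u$, the standard convexity inequality reads
$$
\rho(y) \geq \rho(\xi) + 2\RE\scp{\partial\rho(\xi)}{y-\xi},\qquad y\in\Cn.
$$

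For the modulus bound, substituting $y=0\in\O$ (recall $0\in\O$, so $\rho(0)<0$, and $\rho(\xi)\geq0$ since $\xi\in\OO$) gives
$$
\RE\scp{\partial\rho(\xi)}{\xi} \geq \tfrac{1}{2}\bigl(\rho(\xi)-\rho(0)\bigr) \geq -\tfrac{1}{2}\rho(0) > 0,
$$
so $|\scp{\partial\rho(\xi)}{\xi}|$ is bounded below uniformly for $\xi\in\OO$. The numerator satisfies $|\scp{\partial\rho(\xi)}{z}|\leq|\partial\rho(\xi)|\cdot\diam{\O}$, and both factors are finite on $\overline{\Omega_\eps}$ and $\overline{\O}$ respectively. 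Hence the ratio is uniformly bounded by some $R=R(\O)$.

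For the chord condition, write $\scp{\partial\rho(\xi)}{\xi}=re^{i\phi}$ with $r>0$, so that $t=\pi/2-\phi$ gives $e^{-it}=-ie^{i\phi}$ and therefore
$$
(\lambda-1)e^{-it} = -i\,\frac{\scp{\partial\rho(\xi)}{z-\xi}}{r},\qquad
\IM\bigl((\lambda-1)e^{-it}\bigr) = -\frac{\RE\scp{\partial\rho(\xi)}{z-\xi}}{r}.
$$
Applying the convexity inequality with $y=z\in\O$ gives $\RE\scp{\partial\rho(\xi)}{z-\xi}\leq(\rho(z)-\rho(\xi))/2<0$, hence $\IM((\lambda-1)e^{-it})>0$ and $\lambda$ lies strictly on one side of the chord line. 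Specializing to $z=0$ shows $\lambda=0$ is on this same side; since $\lambda=0$ is the center of the disk $\{|\lambda|\leq R\}$, it lies in the larger circular segment, so this half-plane is precisely the one bounded by the bigger arc, giving $\lambda\in L(t)$.

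The main obstacle is the sign bookkeeping around the definition of $t$ and the identification of the chosen half-plane with the big-arc side; once $t$ is correctly interpreted as rotating $\scp{\partial\rho(\xi)}{\xi}$ to the positive imaginary axis, the convexity of $\rho$ does everything else.
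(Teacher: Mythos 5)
Your proof is correct and is essentially the paper's own argument: the identity $\lambda = 1 + \scp{\partial\rho(\xi)}{z-\xi}/\scp{\partial\rho(\xi)}{\xi}$ combined with a uniform lower bound on $\abs{\scp{\partial\rho(\xi)}{\xi}}$ and the sign condition $\RE\scp{\partial\rho(\xi)}{z-\xi}\leq 0$ (the paper's two displayed estimates), your rotation by $e^{-it}$ merely making explicit the chord/half-plane bookkeeping that the paper leaves implicit. One small remark: the gradient inequality $\rho(y)\geq\rho(\xi)+2\RE\scp{\partial\rho(\xi)}{y-\xi}$ with $y=0$ or $y=z$ presumes convexity of $\rho$ along segments running deep into $\O$, whereas the paper only assumes $d^2\rho$ is positive definite on $\{\abs{\rho}\leq\eps\}$; it is safer to derive both estimates from the supporting hyperplane at $\xi$ of the convex level set $\O_{\rho(\xi)}\supseteq\overline{\O}$ (using a small ball $B(0,r_0)\subset\O$ to make the lower bound uniform, as in the paper's estimate $\abs{\scp{\partial\rho(\xi)}{\xi}}\gtrsim\abs{\partial\rho(\xi)}\,\abs{\xi}\gtrsim 1$), which changes nothing else in your argument.
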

\begin{proof}
For $\xi\in\dom$ define
 $$\Lambda(\xi) = \left\{\lambda\in\C : \lambda =  \frac{\scp{\partial\rho(\xi)}{z}}{\scp{\partial\rho(\xi)}{\xi}},\ z\in\O\right\}. $$
The convexity of $\O$ with $0\in\O$ implies that
\begin{equation} \label{eq1}
 \abs{\scp{\partial\rho(\xi)}{\xi}} \gtrsim \abs{\partial\rho(\xi)} \abs{\xi} \gtrsim 1,
\end{equation}
\begin{equation} \label{eq2}
 \RE \scp{\partial\rho(\xi)}{z-\xi} \leq 0,\quad z\in\bar{\O},\ \xi\in\OO.
\end{equation}
The domain $\Lambda(\xi)\subset\C$ is also convex and contains 0, thus the
equality
$$ \frac{\scp{\partial\rho(\xi)}{z}}{\scp{\partial\rho(\xi)}{\xi}} = 1 + \frac{\scp{\partial\rho(\xi)}{z-\xi}}{\scp{\partial\rho(\xi)}{\xi}} $$
with estimates (\ref{eq1}), (\ref{eq2}) completes the proof of the
lemma. \qed
\end{proof}

\begin{lemma} \label{lm:CLF_approx}
Let $\O$ be a strongly convex domain and $r>0.$ Then for every
$k\in\N$ there exist function $K^{glob}_k(\xi,z)$ defined for $\xi\in\OO$
and polynomial in $z\in\O$ with $\deg K_k(\xi,\cdot)\leq k$ and
following properties:

\begin{equation} \label{eq:CLF_approx1}
 \abs{K(\xi,z) - K^{glob}_k(\xi,z)} \lesssim \frac{1}{k^{r}} \frac{1}{d(\xi,z)^{n+r}},\quad d(\xi,z)\geq \frac{1}{k};
\end{equation}

\begin{equation} \label{eq:CLF_approx2}
 \abs{K^{glob}_k(\xi,z)}\lesssim k^n,\quad d(\xi,z)\leq\frac{1}{k}.
\end{equation}

\end{lemma}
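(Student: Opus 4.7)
The plan is to reduce the multivariable kernel approximation to a one-variable approximation problem on the planar set $L(t)$ supplied by the previous lemma. Factor
\[
K(\xi,z)=\frac{1}{\scp{\partial\rho(\xi)}{\xi}^{n}}\cdot\frac{1}{(1-\lambda)^{n}},\qquad \lambda=\lambda(\xi,z)=\frac{\scp{\partial\rho(\xi)}{z}}{\scp{\partial\rho(\xi)}{\xi}};
\]
for fixed $\xi\in\COO$ the map $z\mapsto\lambda$ is affine, so a polynomial of degree $k$ in $\lambda$ becomes a polynomial of degree $\le k$ in $z$. By the previous lemma $\lambda$ ranges over $L(t)\subset\{|\lambda|\le R\}$, and the only singularity of $(1-\lambda)^{-n}$ on $\overline{L(t)}$ lies at $\lambda=1$, which is a smooth boundary point because the chord passes straight through $1$.

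I would then invoke V.K.~Dzyadyk's theorem on polynomial approximation of the Cauchy kernel (\cite{Dz77}, Sec.~7, Part~1, Thm.~1; cf.~\cite{Sh89}) to construct polynomials $q_k(\lambda)$ of degree $\le k$ on $L(t)$ with
\[
\left|\frac{1}{(1-\lambda)^n}-q_k(\lambda)\right|\lesssim \frac{1}{k^{r}\,|1-\lambda|^{n+r}},\qquad |1-\lambda|\ge 1/k,
\]
and $|q_k(\lambda)|\lesssim k^n$ for $|1-\lambda|\le 1/k$. I then set $K^{glob}_k(\xi,z):=\scp{\partial\rho(\xi)}{\xi}^{-n}\,q_k(\lambda(\xi,z))$, which is automatically a polynomial of degree $\le k$ in $z$.

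To transfer the planar bounds into the required ones, I use two comparisons that follow from the previous lemma together with the boundedness of $\partial\rho$ in a neighborhood of $\dom$: namely $|\scp{\partial\rho(\xi)}{\xi}|\asymp 1$, and
\[
|1-\lambda|=\frac{|\V{\xi}{z}|}{|\scp{\partial\rho(\xi)}{\xi}|}\asymp d(\xi,z).
\]
Substituting these into the two Dzyadyk bounds produces (\ref{eq:CLF_approx1}) and (\ref{eq:CLF_approx2}).

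The main obstacle is the uniformity in $\xi$ — equivalently in the angle $t=\pi/2-\arg\scp{\partial\rho(\xi)}{\xi}$ — of Dzyadyk's approximation. As $t$ varies the chord rotates about the singular point $\lambda=1$, so $\{L(t)\}$ is a family rather than a fixed domain. However, each $L(t)$ is a planar convex region bounded by a chord through $1$ and the complementary arc of $|\lambda|=R$, so the geometric data entering Dzyadyk's construction (diameter, chord-arc character, and the fact that $1$ is a smooth boundary point with a uniform distance to the arc) are controlled uniformly in $t$. Verifying that the constants in the Dzyadyk approximation rate can indeed be chosen independent of $t$ is the step requiring the most care.
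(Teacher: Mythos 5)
Your proposal follows essentially the same route as the paper: factor out $\scp{\partial\rho(\xi)}{\xi}^{-n}$, reduce to the one-variable kernel in $\lambda$ on the planar regions $L(t)$, apply Dzyadyk's approximation (with uniformity in $t$ coming from the continuous dependence of the construction on $t$ and the uniform geometry of $L(t)$), and transfer the bounds back via $\abs{1-\lambda}\asymp d(\xi,z)$ and $\abs{\scp{\partial\rho(\xi)}{\xi}}\asymp 1$. The only notable difference is that the paper invokes Dzyadyk only for the simple Cauchy kernel $\frac{1}{1-\lambda}$, getting $T_j(t,\lambda)$ with rate $j^{-r}\abs{1-\lambda}^{-(1+r)}$ and the maximum-principle bound $\abs{T_j}\lesssim j$, and then defines $K^{glob}_k=\scp{\partial\rho(\xi)}{\xi}^{-n}T_j^n$ with $j\asymp k/n$, so the $n$-th-power estimates you attribute directly to Dzyadyk's theorem are instead obtained by raising the first-order approximant to the $n$-th power.
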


\begin{proof}
Due to \cite{Dz77} and \cite{Sh93} for any $j\in\N$ there exists a
function $T_j(t,\lambda)$ polynomial in $\lambda$ with $\deg
T_j(t,\cdot)\leq j$ such that
\begin{equation} \label{eq:Cauchy_approx1}
\left\vert \frac{1}{1-\lambda} - T_j(t,\lambda) \right\vert \lesssim
\frac{1}{j^r}\frac{1}{\abs{1-\lambda}^{1+r}}
\end{equation} for $\lambda\in
L(t)\setminus\left\{\lambda : \abs{1-\lambda}<\frac{1}{j}\right\}$
and coefficients of polynomials $T_j(t,\lambda)$ continuously depend
on $t.$ Note also that by maximum principle
\begin{equation} \label{eq:Cauchy_approx2}
T_j(t,\lambda) \lesssim j,\quad \lambda\in
L(t)\bigcap\left\{\lambda : \abs{1-\lambda}<\frac{1}{j}\right\}.
\end{equation}

Let $t(\xi) = \frac{\pi}{2} - \arg(\scp{\partial\rho(\xi)}{\xi}$ and for $j\in\N$ and $(j-1)<k\leq jn$ define $$ K_k^{glob}(\xi,z) =
K_{jn}^{glob}(\xi,z) = \frac{1}{\scp{\partial\rho(\xi)}{\xi}^n}
T_j^n\left(t(\xi),\frac{\scp{\partial\rho(\xi)}{z}}{\scp{\partial\rho(\xi)}{\xi}}\right).$$
Due to definition of $T_j$ polynomials $K_{k}^{glob}(\xi,\cdot)$
satisfy relations~(\ref{eq:CLF_approx1}), (\ref{eq:CLF_approx2}). \qed

\end{proof}

\subsection{Kor\'{a}nyi regions}
For $\xi\in\dom$ and $\eps>0$ we define the {\it inner Kor\'{a}nyi
region} as
$$D^i(\xi,\eta,\eps) = \{\tau\in\O : \pr_{\dom}(\tau) \in B(\xi,-\eta\rho(\tau)),\ \rho(\tau)>-\eps \}. $$

The strong convexity of $\O$ implies that area-integral inequality
by S.~Krantz and S.Y.~Li~\cite{KL97} for $f\in H^p(\O),\
0<p<\infty,$ could be expressed as

\begin{equation} \label{ineq:Luzin_internal}
  \intl_\dom d\sigma(z) \left( \intl_{D^i(z,\eta,\eps)} \abs{\partial f(\tau)}^2
  \frac{d\mu(\tau)}{(-\rho(\tau))^{n-1}}\right)^{p/2} \leq c(\O,p) \intl_\dom \abs{f}^p d\sigma.
\end{equation}

Consider the decomposition of vector $\tau\in\Cn$ as $\tau = w +
t n(\xi),$ where $w\in T_\xi,\ t\in\C,$ and
$n(\xi)=\frac{\bar\partial\rho(\xi)}{\abs{\bar\partial\rho(\xi)}}$ is a
complex normal vector at $\xi$. We define the {\it external
Kor\'{a}nyi region} as
\begin{multline}\label{df:KoranyExt}
 D^e(\xi,\eta,\eps) = \{\tau\in\CO : \tau = w + t n(\xi),\\ w\in T_\xi,\
t\in\C,\ \abs{w}<\sqrt{\eta\rho(\tau)},\ \abs{\IM(t)}<\eta\rho(\tau),\
\rho(\tau)<\eps \}.
\end{multline}
In appendix~\ref{Area_int} we will proof the area-integral
inequality similar to~(\ref{ineq:Luzin_internal}) for external
regions~$D^e(\xi,\eta,\eps).$

We point out two rules for integration over regions $D^e(\xi,\eta,\eps).$
First, for every function $F$ we have
$$\intl_{\O_\eps\setminus\O} \abs{F(z)} d\mu(z) \asymp \intl_\dom d\sigma(\xi)
\intl_{D^e(\xi,\eta,\eps)} \abs{F(\tau)}
\frac{d\mu(\tau)}{\rho(\tau)^{n}}.
$$
Second, if $F(w) = \tilde{F}(\rho(w))$ then
$$ \intl_{D^e(\xi,\eta,\eps)} \abs{F(\tau)} d\mu(\tau) \asymp \intl_0^\eps \abs{\tilde{F}(t)} t^{n} dt.  $$
Similar rules are valid for regions $D^i(\xi,\eta,\eps).$

We could clarify the estimate of $d(\tau,w)$ in
lemma~\ref{lm:QM_est1} for $\tau\in D^e(z,\eta,\eps).$

\begin{lemma} \label{lm:QM_est2} Let $\O$ be a strongly convex domain and $\eps,\eta>0,$
then
\begin{equation} \label{ineq:QM_est2}
 d(\tau,w) \asymp \rho(\tau) + d(z,w),\quad z,w\in\dom,\ \tau\in
 D^e(z,\eta,\eps).
\end{equation}
\end{lemma}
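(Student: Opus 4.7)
The plan is to reduce the claim to Lemma~\ref{lm:QM_est1} together with a local estimate showing that the projection of $\tau$ to $\dom$ is quasimetrically close to the base point $z$ of the Kor\'anyi region.

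First I would apply Lemma~\ref{lm:QM_est1} to the point $\tau\in\CO$ and $w\in\dom$, obtaining
$$d(\tau,w)\asymp\rho(\tau)+d(\pr_{\dom}(\tau),w).$$
Therefore it suffices to prove the auxiliary estimate
\begin{equation}\label{eq:aux}
d(\pr_{\dom}(\tau),z)\lesssim\rho(\tau),
\end{equation}
since then the quasi-triangle inequality for $d$ on the space of homogeneous type $(\dom,d,\sigma)$ yields simultaneously
$$d(\pr_{\dom}(\tau),w)\lesssim d(\pr_{\dom}(\tau),z)+d(z,w)\lesssim\rho(\tau)+d(z,w)$$
and
$$d(z,w)\lesssim d(z,\pr_{\dom}(\tau))+d(\pr_{\dom}(\tau),w)\lesssim\rho(\tau)+d(\pr_{\dom}(\tau),w),$$
which combine with the previous display to give the asserted equivalence.

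The core step, and the main technical obstacle, is~\eqref{eq:aux}. From the definition of $D^e(z,\eta,\eps)$ the vector $\tau-z$ decomposes as $w_T+tn(z)$ with $w_T\in T_z$, $|w_T|<\sqrt{\eta\rho(\tau)}$, and $|\IM t|<\eta\rho(\tau)$; the real normal part $\RE t$ is comparable to $\rho(\tau)$ by strong convexity and the definition of the signed distance. The point $\pr_{\dom}(\tau)$ is obtained by flowing $\tau$ along the inward normal direction through a distance $\asymp\rho(\tau)$, so its displacement from $z$ has complex tangential component of order $\sqrt{\rho(\tau)}$ and complex normal component of order $\rho(\tau)$. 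I would then invoke the standard local model for the Levi quasimetric on a strongly convex boundary: in terms of tangential coordinates $u\in T_z$ and of $\IM t$, one has $d(\pr_{\dom}(\tau),z)\asymp|u|^{2}+|\IM t|+O(\rho(\tau))$, and each of these three contributions is $\lesssim\rho(\tau)$ by the bounds above.

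The main obstacle is precisely verifying the quadratic tangential behavior of $d$ and the linear normal behavior in a neighbourhood of $z$, uniformly for $z\in\dom$; this is where the $C^{3}$-smoothness and strong convexity of $\rho$ enter, via a Taylor expansion of $\partial\rho$ around $\pr_{\dom}(\tau)$ and the positive definiteness of $d^{2}\rho$. Once~\eqref{eq:aux} is established, the rest of the argument is purely formal through the quasi-triangle inequality and Lemma~\ref{lm:QM_est1}.
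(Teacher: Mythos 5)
Your proposal is correct and follows essentially the same route as the paper: both apply Lemma~\ref{lm:QM_est1} at $\tau$ and then use the quasi-triangle inequality together with the key estimate $d(\pr_{\dom}(\tau),z)\lesssim\eta\rho(\tau)$, which the paper simply asserts from the definition of $D^e(z,\eta,\eps)$ while you sketch its verification via the parabolic local behavior of the quasimetric. The extra detail you supply is consistent with the paper's (terser) argument, so there is no substantive difference.
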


\begin{proof}
For $\tau\in D^e(z,\eta,\eps)$ we denote $\hat{\tau}=\pr_{\dom}(\tau),$ then $d(\hat{\tau},z)\lesssim
\eta\rho(\tau)$ and by lemma~\ref{lm:QM_est1}
\begin{equation*}
d(\tau,w)\lesssim \rho(\tau)+d(\hat{\tau}\lesssim
\rho(\tau)+d(\hat{\tau},z) +d(z,w) \lesssim \rho(\tau) +d(z,w).
\end{equation*}
On the other hand,
\begin{multline*} \rho(\tau) +d(z,w) \lesssim \rho(\tau) +
(d(z,\hat{\tau})+d(\hat{\tau},w))\lesssim
(1+\eta)\rho(\tau)+d(\hat{\tau},w)\\
\lesssim \rho(\tau)+d(\hat{\tau},w) \lesssim d(\tau,w). 
\end{multline*}
\qed
\end{proof}

\section{The method of pseudoanalytical continuation \label{PAC}}

\subsection{Definition of pseudoanalytical continuation}
The main tool of this paper is the method of continuation of
function $f\in H(\O)$ outside the domain $\Omega.$ Let $f\in
H^1(\Omega)$ and let the boundary values of $f$ almost everywhere
coincide with the boundary values of some function $\f\in
C_{loc}^1(\COO)$ such that $ \abs{\bar{\partial}\f}\in L^1(\CO).$
Then by Stokes formula for $z\in\O$ we have
\begin{multline*} 
f(z) = \lim\limits_{r\to 0+ } \cn \int\limits_{\dom_r} \frac{\f(\xi) \partial\rho(\xi)\wedge(\bar{\partial}\partial\rho(\xi))^{n-1}}{\scp{\partial\rho(\xi)}{\xi-z}^n} = \\
\lim\limits_{r\to 0+ } \cn \int\limits_{\CO_r}
\frac{\bar{\partial}\f(\xi)\wedge
\partial\rho(\xi)\wedge(\bar{\partial}\partial\rho(\xi))^{n-1}}{\scp{\partial\rho(\xi)}{\xi-z}^n}\\
=  \cn \int\limits_{\CO} \frac{\bar{\partial}\f(\xi) \wedge
\partial\rho(\xi)\wedge(\bar{\partial}\partial\rho(\xi))^{n-1}}{\scp{\partial\rho(\xi)}{\xi-z}^n},
\end{multline*}
since (for details see \cite{Ra86})
$$ d_\xi\left( \frac{\partial\rho(\xi)\wedge(\bar{\partial}\partial\rho(\xi))^{n-1}}{\scp{\partial\rho(\xi)}{\xi-z}^n} \right) = 0,\quad z\in\O,\ \xi\in\CO. $$

This formula allows us to study properties of function $f\in H(\O)$
relying on estimates of its continuation.
\begin{definition} We call the function $\f\in
C_{loc}^1(\Cn\setminus\overline{\O})$ {\it the pseudoanalytic continuation of the function}  $f\in H(\O)$ if
\begin{equation} \label{eq:PAC}
f(z) = \cn \int\limits_{\CO} \frac{\bar{\partial}\f(\xi) \wedge
\partial\rho(\xi)\wedge(\bar{\partial}\partial\rho(\xi))^{n-1}}{\scp{\partial\rho(\xi)}{\xi-z}^n},\ z\in\O.
\end{equation}
\end{definition}
Note that it is not
necessary for the function~$\f$ to be a continuation in terms of
coincidence of boundary values. 

\subsection{Continuation by symmetry}

For $z\in\O_\eps\setminus\O$ we define the symmetric along $\dom$ point $z^*\in\O$ by $$z^* - z = 2(\pr_{\dom}(z)-z).$$

\begin{theorem} \label{thm:PAC_sym}
 Let $f\in H^1_p(\O)$ and $1<p<\infty,\ m\in\N.$ There exist a pseudoanalytical continuation
 $\f\in C_{loc}^1(\COO)$ of function $f$ such that
 $\supp{\f}\subset\O_\eps,$
 $\abs{\dbar \f(z)}\in L^p(\O_\eps\setminus\overline{\O})$ and
 \begin{equation} \label{ineq:PAC_sym1}
  \abs{\dbar \f(z)} \lesssim \max\limits_{\abs{\alpha}=m}
  \abs{\partial^{\alpha}f(z^*)}
  \rho(z)^{m-1},\quad z\in\O_\eps\setminus\O.
 \end{equation}
\end{theorem}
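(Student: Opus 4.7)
The plan is to build $\f$ by applying the near-boundary reflection $z\mapsto z^*$ and then taking the $(m-1)$-th order holomorphic Taylor polynomial of $f$ about $z^*$. Concretely, fix a smooth cutoff $\chi\in C^\infty(\R)$ with $\chi(t)=1$ for $t\le\eps/4$ and $\chi(t)=0$ for $t\ge\eps/2$, and set
\[
 \f(z)=\chi(\rho(z))\suml_{\abs{\alpha}<m}\frac{\partial^\alpha f(z^*)}{\alpha!}(z-z^*)^\alpha,\qquad z\in\COO,
\]
extended by zero outside $\O_\eps$. Because $\pr_{\dom}$ is $C^2$ on $\O_\eps\setminus\O$, the map $z\mapsto z^*$ is $C^2$, so $\f\in C^1_{loc}(\COO)$ with $\supp{\f}\subset\O_\eps$. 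Since $z^*=z$ on $\dom$, $\f$ agrees with $f$ in the non-tangential boundary sense, and the Stokes-formula calculation already displayed at the opening of Section~\ref{PAC} then supplies the pseudoanalytic identity~\eqref{eq:PAC}.

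Next I compute $\dbar\f$. Write $P(z,w)=\suml_{\abs{\alpha}<m}\frac{\partial^\alpha f(w)}{\alpha!}(z-w)^\alpha$; each $\partial^\alpha f$ is holomorphic on $\O$, so $P$ is holomorphic in $w$ and polynomial in $z$. On the inner slab $\{0<\rho(z)<\eps/4\}$ where $\chi\equiv 1$ this gives
\[
 \dbar_j\f(z)=\suml_{k=1}^n\frac{\partial P}{\partial w_k}(z,z^*)\,\dbar_j z^*_k,
\]
and a routine multi-index computation produces the telescoping cancellation
\[
 \frac{\partial P}{\partial w_k}(z,w)=\suml_{\abs{\alpha}=m-1}\frac{\partial^{\alpha+e_k}f(w)}{\alpha!}(z-w)^\alpha,
\]
so that only derivatives of order exactly $m$ survive. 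Combining this with the elementary estimates $\abs{z-z^*}\asymp\rho(z)$ and $\abs{\dbar z^*}\lesssim 1$ (both consequences of the $C^2$-smoothness of $\pr_{\dom}$) yields~\eqref{ineq:PAC_sym1} on $\{\rho(z)\le\eps/4\}$. On the transitional annulus $\eps/4<\rho(z)<\eps/2$ the contribution of $\dbar\chi$ adds a bounded multiple of $\max_{\abs{\alpha}<m}\abs{\partial^\alpha f(z^*)}$, and since $\rho(z)\asymp\eps$ there this term is absorbed into~\eqref{ineq:PAC_sym1} by enlarging the implicit constant.

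To obtain $\abs{\dbar\f}\in L^p(\OO)$, I use the $C^2$-diffeomorphism $z\mapsto w=z^*$ (whose Jacobian is bounded above and below, with $\rho(z)\asymp\abs{\rho(w)}$) to rewrite $\intl_\OO\abs{\dbar\f}^p\,d\mu$, up to constants, as
\[
 \intl_{\O\setminus\O_{-\eps'}}\max_{\abs{\alpha}=m}\abs{\partial^\alpha f(w)}^p\,\abs{\rho(w)}^{(m-1)p}\,d\mu(w),
\]
and control this integral by $\norm{f}_{H^1_p}^p$ via interior Cauchy estimates (which trade $m-1$ derivatives for the $\rho^{m-1}$ weight) together with the weighted area-integral characterization of the Hardy-Sobolev space, a direct consequence of~\eqref{ineq:Luzin_internal} applied to $\partial^\beta f$ for $\abs{\beta}\le 1$. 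The main obstacle is the multi-index algebra behind the telescoping identity for $\partial P/\partial w_k$; once that cancellation is in hand, the smoothness of $\f$, the boundary matching, and the $L^p$ estimate fall into place routinely.
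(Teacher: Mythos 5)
Your construction is exactly the paper's: the reflected Taylor polynomial $\f_0(z)=\sum_{\abs{\alpha}\le m-1}\partial^{\alpha}f(z^*)\,(z-z^*)^{\alpha}/\alpha!$, a cutoff in $\rho$, the telescoping computation of $\dbar_j\f$ through the chain rule in $z^*$, and the bounds $\abs{z-z^*}\asymp\rho(z)$, $\abs{\dbar z^*}\lesssim 1$, giving (\ref{ineq:PAC_sym1}) where $\chi\equiv 1$. One aside: your claim that the $\dbar\chi$-term is ``absorbed into (\ref{ineq:PAC_sym1}) by enlarging the implicit constant'' is not literally true, since $\max_{\abs{\alpha}\le m-1}\abs{\partial^{\alpha}f(z^*)}$ is not pointwise dominated by $\max_{\abs{\alpha}=m}\abs{\partial^{\alpha}f(z^*)}\rho(z)^{m-1}$ (take $f\equiv 1$: the right-hand side vanishes on the cutoff annulus while $\f_0\chi'(\rho)\dbar\rho$ does not); the paper glosses over the same point, so this is a shared imprecision rather than a new error.

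The genuine gap is in your last step, the proof that $\abs{\dbar\f}\in L^p(\O_\eps\setminus\overline{\O})$. After the Cauchy estimate trades the order-$m$ derivatives against the weight $\rho^{m-1}$, what remains to be shown is
\begin{equation*}
\intl_{\O\setminus\O_{-\eps'}}\Bigl(\sup_{\abs{\tau-w}<c\abs{\rho(w)}}\abs{\partial f(\tau)}\Bigr)^p d\mu(w)\ \lesssim\ \norm{\partial f}^p_{H^p(\O)},
\end{equation*}
and you assert this is ``a direct consequence'' of the area-integral inequality (\ref{ineq:Luzin_internal}) applied to $\partial^{\beta}f$, $\abs{\beta}\le 1$. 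It is not: (\ref{ineq:Luzin_internal}) bounds the boundary $L^p$-norm of a Lusin square function of derivatives \emph{one order higher} than the $H^p$-function on the right, which is an object of a different nature from a collar volume bound for $\partial f$ itself. If one tries to extract a pointwise bound from the square function via the sub-mean-value property, one gets $\sup_{B(w,c\abs{\rho(w)})}\abs{\partial f}\lesssim\abs{\rho(w)}^{-(n+1)/2}S(\pr_{\dom}(w))$, and the resulting integral in the normal direction, $\int_0^{\eps}t^{-p(n+1)/2}dt$, diverges for every $p\ge 1$; so no direct deduction of this kind can work. The paper closes this step differently: it dominates the ball-sup by the sup over an internal Kor\'anyi region $D^i(\pr_{\dom}(z),\cdot,\cdot)$ and then invokes the admissible maximal theorem (Theorem~2.1 of \cite{KL97}) for $\partial f\in H^p(\O)$. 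An even more elementary repair of your argument: $\abs{\partial f}^p$ is plurisubharmonic, so the sup over $B(w,c\abs{\rho(w)})$ is controlled by the mean over $B(w,2c\abs{\rho(w)})$, and Fubini over the level sets $\dom_t$ together with $\sup_t\intl_{\dom_t}\abs{\partial f}^p d\sigma_t=\norm{\partial f}^p_{H^p(\O)}<\infty$ gives the collar bound. Either way, the hypothesis $f\in H^1_p(\O)$ must enter through a maximal or mean-value estimate for $\partial f$, not through the square-function inequality you cite.
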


\begin{proof}
Define
\begin{equation} \label{ineq:PAC_sym0}
 \f_0(z) = \sum\limits_{\abs{\alpha}\leq m-1} \partial^{\alpha}f(z^*)
 \frac{(z-z^*)^\alpha}{\alpha !},\ z\in\OO.
\end{equation}
Let $\alpha\pm e_k=(\alpha_1,\ldots,\alpha_k\pm 1,\alpha_n)$ and
define $(z-z^*)^{\alpha-e_k}=0$ if $\alpha_k=0.$ In these notations
we have
\begin{multline}
 \dbar_j \f_0=\SK \sum\limits_{\abs{\alpha}\leq m-1} \left(\partial^{\alpha+e_k}f(z^*)
 \frac{(z-z^*)^\alpha}{\alpha !} - \partial^{\alpha}f(z^*)
 \frac{(z-z^*)^{\alpha-e_k}}{(\alpha-e_k)!}\right)\dbar_j z^*_k\\
 = \SK \sum\limits_{\abs{\alpha}=m-1} \partial^{\alpha+e_k}f(z^*)
 \frac{(z-z^*)^\alpha}{\alpha !} \dbar_j z^*_k,
\end{multline}
hence, $$ \abs{\dbar \f_0(z)} \lesssim \max\limits_{\abs{\alpha}=m}
\abs{\partial^{\alpha}f(z^*)}
  \rho(z)^{m-1},\quad z\in\CO. $$
Consider function $\chi\in C^\infty(0,\infty)$ such that $\chi(t)=1$
for $t\leq \eps/2$ and $\chi(t)=0$ for $t\geq \eps.$ The function
$\f(z) = \f_0(z)\chi(\rho(z))$ satisfies the condition
(\ref{ineq:PAC_sym0}) and $\supp\f~\subset~\O_\eps.$

Let $d = \dist{z^*}{\dom}/10,$ then for every mutiindex $\alpha$
such that $\abs{\alpha}=m$ by Cauchy maximal inequality we have
$$\abs{\partial^\alpha f(z^*)} \lesssim d^{-m+1} \sup\limits_{\abs{\tau-z^*}<d}
\abs{\partial f(\tau)} \lesssim \rho(z)^{-m+1} \sup\limits_{\tau\in
D^i(\pr_{\dom}(z),c_0d,\eps)} \abs{\partial f(\tau)},  $$ for some $c_0>0.$
Finally, by theorem~2.1 from \cite{KL97} we get
\begin{multline*}
\int_{\OO} \abs{\dbar \f(z)}^p d\mu(z) \lesssim \int_{\O\setminus\O_{-\eps}} d\mu(z) \left(\sup\limits_{\tau\in
D^i(\pr_{\dom}(z),c_0d,\eps)} \abs{\partial f(\tau)}\right)^p\\
\lesssim \norm{\partial f}^p_{H^p(\O)}<\infty.
\end{multline*}
\qed
\end{proof}

\subsection{Continuation by global approximations.\label{PAC_glob}}
Let $f\in H^1(\Omega)$ and consider a polynomial sequence
$P_1,P_2,\ldots$ converging to $f$ in $L^1(\dom).$ Define
$$ \lambda(z) = \rho(z)^{-1} \abs{P_{2^{k+1}}(z) - P_{2^{k}}(z)},\quad 2^{-k}<\rho(z)\leq 2^{-k+1}.$$

\begin{theorem} \label{thm:PAC_glob}
 Assume that $\lambda\in L^p(\Cn\setminus\Omega)$ for some $p\geq 1.$ Then there exist a pseudoanalytical continuation $\f$ of function $f$
 such that
\begin{equation}\label{eq:PAC_la}
\abs{\bar{\partial} \mathbf{f}(z)} \lesssim \lambda(z),\quad
z\in\Cn\setminus\Omega.
\end{equation}
\end{theorem}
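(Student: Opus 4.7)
The plan is to build $\f$ as a smooth dyadic-shell interpolation of the polynomials $P_{2^k}$ and then deduce~(\ref{eq:PAC}) from the Stokes-formula derivation recorded before the definition of pseudoanalytic continuation. Fix $\phi\in C^{\infty}(\R)$ with $\phi\equiv 1$ on $(-\infty,1]$ and $\phi\equiv 0$ on $[2,\infty)$, set $\phi_k(z)=\phi(2^k\rho(z))$, and take a cutoff $\eta\in C_0^{\infty}(\Cn)$ with $\eta\equiv 1$ near $\overline{\O}$ and $\supp{\eta}\subset\O_\eps$. I would define
\begin{equation*}
 \f(z)=\eta(z)\left(P_2(z)+\SK\phi_k(z)\bigl(P_{2^{k+1}}(z)-P_{2^k}(z)\bigr)\right),\qquad z\in\COO.
\end{equation*}
On the dyadic shell $S_k=\{2^{-k}<\rho\leq 2^{-k+1}\}$ one has $\phi_j\equiv 1$ for $j<k$, $\phi_j\equiv 0$ for $j>k$, and only $\phi_k$ transitions through intermediate values; in particular the series is locally finite, $\f\in C^{\infty}(\COO)$, and $\f|_{S_k\cap\{\eta=1\}}=P_{2^k}+\phi_k\bigl(P_{2^{k+1}}-P_{2^k}\bigr)$.

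Since each $P_{2^k}$ is holomorphic in $\Cn$, on $S_k\cap\{\eta=1\}$ we get $\dbar\f=\dbar\phi_k\,\bigl(P_{2^{k+1}}-P_{2^k}\bigr)$, and the chain rule yields $\abs{\dbar\phi_k}\lesssim 2^k\asymp\rho^{-1}$ on $S_k$. Consequently
\begin{equation*}
 \abs{\dbar\f(z)}\lesssim\rho(z)^{-1}\abs{P_{2^{k+1}}(z)-P_{2^k}(z)}=\lambda(z),
\end{equation*}
which is~(\ref{eq:PAC_la}); the $\dbar\eta$-contribution is supported on a fixed compact annulus bounded away from $\dom$ and is harmless. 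Since $\f$ is compactly supported in $\O_\eps$ and $\lambda\in L^p(\CO)\subset L^1_{loc}$, also $\abs{\dbar\f}\in L^1(\CO)$; and since $\phi_k(z)\to 1$ as $\rho(z)\to 0$ for every fixed $k$ together with $P_{2^k}\to f$ in $L^1(\dom)$, the boundary values of $\f$ on $\dom$ coincide with those of $f$. The Stokes-formula argument preceding the definition of pseudoanalytic continuation then yields~(\ref{eq:PAC}).

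The hard part will be to justify rigorously the $r\to 0^+$ passage in the Stokes formula on $\overline{\O_r}\setminus\O$, since the $L^1$-convergence $P_{2^k}\to f$ is stated only on $\dom$ and a direct Bernstein-type bound on $P_{2^k}$ would grow like $C^{2^k}$ off $\overline{\O}$. I would instead select a subsequence $r_k\in S_k$, for example $r_k=2^{-k+1}$, on which $\phi_k\equiv 0$ so that $\f|_{\dom_{r_k}}=P_{2^k}|_{\dom_{r_k}}$, and then use a Fubini argument on the shell decomposition together with $\lambda\in L^p$ to bound the oscillation of $P_{2^k}$ between $\dom_{r_k}$ and $\dom$ by the tail of the telescoping series $\sum_{j\geq k}\norm{P_{2^{j+1}}-P_{2^j}}$, which vanishes as $k\to\infty$.
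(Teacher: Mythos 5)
Your construction is the paper's construction: on the shell $2^{-k}<\rho\le 2^{-k+1}$ your $\f$ reduces (inside $\{\eta=1\}$) to $P_{2^k}+\phi_k\,(P_{2^{k+1}}-P_{2^k})$, which is exactly the paper's $\f_0$ up to the choice of cutoff thresholds, and the estimate $\abs{\dbar\f}\lesssim\lambda$ is obtained the same way (the paper is no more careful than you are about the transition annulus of the outer cutoff). The genuine difference is how (\ref{eq:PAC}) is verified. The paper sidesteps any boundary limit for $\f$ itself: it introduces $F_k$ equal to $\f$ on $\{\rho>2^{-k}\}$ and to $P_{2^{k+1}}$ on $\{\rho<2^{-k}\}$; since $F_k$ is holomorphic in the neighborhood $\O_{2^{-k}}$ of $\overline{\O}$, the Stokes argument gives the exact identity $P_{2^{k+1}}(z)=\cn\intl_{\CO}\dbar F_k(\xi)\wedge\dS{\xi}\,\V{\xi}{z}^{-n}$ for $z\in\O$, and then lets $k\to\infty$ by dominated convergence. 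Your device of evaluating the boundary integral on the level sets $r_k=2^{-k+1}$, where $\f=P_{2^k}$, is morally the same trick.

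The soft spot is your plan for finishing that step. You do not need to (and cannot easily) ``bound the oscillation of $P_{2^k}$ between $\dom_{r_k}$ and $\dom$'': that quantity involves the transversal derivative of the fixed polynomial $P_{2^k}$ in the shell, which is precisely the Bernstein-type growth you wanted to avoid, and it is not controlled by the tail $\suml_{j\geq k}\norm{P_{2^{j+1}}-P_{2^j}}$ (nor is that tail obviously summable from the mere hypothesis $P_{2^k}\to f$ in $L^1(\dom)$). What is actually needed is much less: the Cauchy--Leray--Fantappi\`{e} formula for $\O_{r_k}$ applied to the polynomial gives $\intl_{\dom_{r_k}}\f\,K_{r_k}(\cdot,z)\,\omega_{r_k}=P_{2^k}(z)$ exactly, and $P_{2^k}(z)\to f(z)$ for each fixed $z\in\O$ simply because $P_{2^k}\to f$ in $L^1(\dom)$ and $K(\cdot,z)$ is bounded on $\dom$ for fixed interior $z$; combined with dominated convergence for the solid integrals (using $\abs{\dbar\f}\lesssim\lambda\in L^1$ on the compact support and the kernel bound away from $z$), this closes (\ref{eq:PAC}) — which is in substance what the paper's $F_k$ argument does. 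Finally, your attempt to check that the boundary values of $\f$ coincide with those of $f$ is both unjustified as stated (as $\rho(z)\to 0$ the relevant index $k\to\infty$, so ``$\phi_k\to 1$ for fixed $k$'' is irrelevant) and unnecessary: the paper's definition of pseudoanalytic continuation requires only the identity (\ref{eq:PAC}), not coincidence of boundary values.
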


\begin{proof}
Consider function $\chi\in C^\infty(0,\infty)$ such that $\chi(t)=1$
for $t\leq \frac{5}{4}$ and $\chi(t)=0$ for $t\geq \frac{7}{4}.$ We let 
$$  \mathbf{f}_0(z) =  P_{2^{k}}(z) + \chi(2^k\rho(z)) (P_{2^{k+1}}(z) - P_{2^{k}}(z)), \quad 2^{-k}<\rho(z)<2^{-k+1},\ k\in\mathbb{N},$$
and define the continuation of a function $f$ by formula $\f=\chi(2\rho(z))\f_0(z).$

Now $\mathbf{f}$ is $C^1$-function on $\Cn\setminus\overline{\Omega}$ and
$\abs{\bar{\partial} \mathbf{f}(z)} \lesssim \lambda(z).$ We define
a function $F_k(z)$ as $F_k(z)= \mathbf{f}(z)$ for $\rho(z)>2^{-k}$
and as $F_k(z) = P_{2^{k+1}}(z)$ for $\rho(z)<2^{-k}.$ The the
function $F_k$ is smooth and holomorphic in $\Omega_{2^{-k}},$ and
$\abs{\dbar F_k(z)}\lesssim\lambda(z)$ for
$z\in\Cn\setminus\Omega_{2^{-k}}.$ Thus similarly to \ref{eq:PAC} we
get
$$P_{2^{k+1}}(z) = F_k(z) = \cn \int\limits_{\CO} \frac{\bar{\partial}F_k(\xi) \wedge\dS{\xi} }{\V{\xi}{z}^n},\ z\in\Omega,$$
We can pass to the limit in this formula by the dominated
convergence theorem; hence, function~$\f$ satisfies the
formula~(\ref{eq:PAC}) and is a pseudoanalytical continuation of
function~$f$. \qed
\end{proof}

\subsection{Pseudoanalytical continuation of Hardy-Sobolev spaces\label{PAC_Sobolev}}
\begin{theorem} \label{thm:PAC_Sobolev}
Let $\O$ be a strongly convex domain, $1<p<\infty,$ $l\in\N$ and
$f\in H^p(\O).$ Then $f\in H_p^l(\O)$ if and only if there exists
such pseudoanalytical continuation $\f$ that for some $\eta>0$

\begin{equation} \label{ineq:PAC_Sobolev}
 \intl_\dom d\sigma(z) \left(\intl_{D^e(z,\eta,\eps)} \abs{\dbar \f(\tau) \rho(\tau)^{-l}}^2
 d\nu(\tau)
 \right)^{p/2} <\infty,
\end{equation}
where $d\nu(\tau)=\frac{d\mu(\tau)}{\rho(\tau)^{n-1}}.$
\end{theorem}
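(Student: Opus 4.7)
The proof splits naturally into two implications, to be treated separately.

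For the forward implication, given $f\in H^l_p(\O)$, the plan is to take the symmetric pseudoanalytic continuation $\f$ provided by Theorem~\ref{thm:PAC_sym} with $m=l+1$; since the Taylor polynomial defining $\f_0$ in (\ref{ineq:PAC_sym0}) uses only derivatives of $f$ of order $\leq l$, this construction is admissible. The intermediate estimate in the proof of that theorem yields
$$|\dbar\f(\tau)| \lesssim \max_{|\alpha|=l+1}|\partial^\alpha f(\tau^*)|\,\rho(\tau)^l,$$
so $|\dbar\f(\tau)|\rho(\tau)^{-l}\lesssim |\partial g(\tau^*)|$ for $g:=\partial^\beta f\in H^p(\O)$ with some $|\beta|=l$. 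The key geometric observation is that the reflection $\tau\mapsto\tau^*$ is a bi-Lipschitz diffeomorphism of $\O_\eps\setminus\overline{\O}$ onto its image in $\O$, with $\rho(\tau)\asymp -\rho(\tau^*)$ and $\pr_\dom\tau^* = \pr_\dom\tau$, and that it sends $D^e(z,\eta,\eps)$ into some $D^i(z,\eta',\eps')$; indeed the tangential offset $|w'|<\sqrt{\eta\rho(\tau)}$ in the definition of $D^e$ gives $d(\pr_\dom\tau^*,z)\asymp|w'|^2\lesssim \eta(-\rho(\tau^*))$ by strong convexity. Under this change of variables $d\nu$ pulls back to the Lusin measure $d\mu/(-\rho)^{n-1}$ up to a bounded factor, so the left-hand side of (\ref{ineq:PAC_Sobolev}) is dominated by the standard Krantz-Li area integral of $g$, and (\ref{ineq:Luzin_internal}) bounds the whole thing by $\|g\|_{H^p}^p\leq\|f\|_{H^l_p}^p$.

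For the reverse implication, assume (\ref{ineq:PAC_Sobolev}) holds for some pseudoanalytic continuation $\f$. The plan is to prove $\partial^\beta f\in H^p(\O)$ for every $|\beta|\leq l$ by bounding the Lusin area integral of $\partial^{l-1}f$ in $L^p(\dom)$ and invoking the reverse Krantz-Li characterization of $H^p$. Differentiating the representation formula (\ref{eq:PAC}) in $z$ under the integral sign gives
$$\partial^l f(v) = \cn\int_{\CO}\dbar\f(\xi)\wedge\partial_v^l\left(\frac{\partial\rho(\xi)\wedge(\dbar\partial\rho(\xi))^{n-1}}{\scp{\partial\rho(\xi)}{\xi-v}^n}\right),$$
with differentiated kernel satisfying $|\partial_v^l K(\xi,v)|\lesssim d(\xi,v)^{-n-l}$ by Lemma~\ref{lm:QM_est1}. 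Writing the Lusin integral of $\partial^{l-1}f$ as a double integral in $v\in D^i(w,\eta,\eps)$ and $\xi\in\CO$, decomposing the $\xi$-integral via the tent rule $\intl_{\CO}\cdot\,d\mu\asymp\intl_\dom d\sigma(z)\intl_{D^e(z,\eta,\eps)}\cdot\,d\mu/\rho^n$, applying Lemma~\ref{lm:QM_est2} to control $d(\xi,v)$ uniformly for $\xi\in D^e(z,\eta,\eps)$, and Cauchy-Schwarz against the measure $d\nu$ reduces the estimate to an application of the external area integral inequality from Appendix~\ref{Area_int}.

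The principal obstacle is the reverse direction: the kernel decay $d(\xi,v)^{-n-l}$ is exactly critical with respect to the weight $\rho(\xi)^{2l-n-1}$ arising from $d\nu\cdot\rho^{-2l}$, so a single pointwise Cauchy-Schwarz (which would produce a Riesz-type potential $\intl_\dom A(z)d(z,w)^{-n}d\sigma(z)$) is too lossy to capture the $L^p(\dom)$ bound when $p\neq 2$; a genuine tent-space framework is required. The external area integral inequality of Appendix~\ref{Area_int} is precisely the tool that transports the assumed $T^p_2$-type norm of $\dbar\f\rho^{-l}$ on $\CO$ into the internal Lusin-area estimate on $\dom$ needed to close the argument.
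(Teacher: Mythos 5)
Your forward implication is essentially the paper's own argument (Theorem~\ref{thm:PAC_sym} with $m=l+1$, the reflection $\tau\mapsto\tau^*$ carrying $D^e(z,\eta,\eps)$ into an internal Kor\'anyi region, then the Krantz--Li inequality~(\ref{ineq:Luzin_internal}) applied to the order-$l$ derivatives), and that half is fine. The reverse implication, however, has a genuine gap, in two places. First, an off-by-one error: bounding the Lusin area integral of $\partial^{l-1}f$ and invoking a converse area-integral characterization would only give $\partial^{\beta}f\in H^p(\O)$ for $\abs{\beta}\leq l-1$; membership $f\in H^l_p(\O)$ requires the order-$l$ derivatives in $H^p$, i.e.\ control of the area function built from $\partial^{l+1}f$, hence differentiated kernels of order $n+l+1$, which your sketch never treats (and the converse of~(\ref{ineq:Luzin_internal}) that you invoke is nowhere stated or proved in the paper).

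Second, and more seriously, the reduction you describe to Theorem~\ref{thm:area_int} does not exist as stated. That theorem bounds, for a \emph{boundary} datum $g\in L^{p}(\dom)$, the $L^{p}(\dom)$-norm of the external square function $z\mapsto\norm{\intl_\dom g(w)\,dS(w)\,\V{\tau}{w}^{-n-l}}_{L^2_\tau(D^e(z,\eta,\eps),\,d\nu_l)}$; it says nothing about an operator carrying the exterior data $\dbar\f\,\rho^{-l}$ (a tent-space function on $\CO$) into interior Lusin functions on $\dom$. After your Cauchy--Schwarz over each $D^e(z,\eta,\eps)$ you are left with exactly the critical boundary convolution with kernel $d(z,w)^{-n}$ that you yourself identify as not estimable pointwise, and no cited result closes it. The paper avoids this entirely by duality: fix $g\in L^{p'}(\dom)$, write $\intl_\dom g\,\partial^{\alpha}f\,dS$ via the differentiated representation~(\ref{eq:PAC}), interchange the order of integration so that the whole $g$-dependence collapses into $\Phi_l(\xi)=\intl_\dom g(z)\,dS(z)\,\V{\xi}{z}^{-n-l}$, then apply Cauchy--Schwarz over external Kor\'anyi regions and H\"older in $\xi$; the $\f$-factor is hypothesis~(\ref{ineq:PAC_Sobolev}) and the $g$-factor is precisely Theorem~\ref{thm:area_int} with exponent $p'$, which also yields all orders $\abs{\alpha}\leq l$ at once. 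To salvage your direct route you would have to prove a tent-to-area mapping theorem of your own (essentially redoing the $T1$ argument of the appendix for a different operator) or insert this duality step; as written, the sufficiency half is not established.
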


\begin{proof}
Let $f\in H^l_p(\O).$ By theorem~\ref{thm:PAC_sym} we could
construct pseudoanalytical continuation~$\f$ such that
$$\abs{\dbar \f(z)} \lesssim \max\limits_{\abs{\alpha}=l+1} \abs{\partial^\alpha
f(z^*)}
  \rho(z)^{l},\quad z\in\CO.$$

Note that the symmetry $(z\mapsto z^*)$ with respect to $\dom$ maps the external
sector $D^e(z,\eta,\eps)$ into some internal Kor\'{a}nyi sector. Indeed,
for every $\eta>0$ there exists $\eta_1,\eps_1>0$ such that
$$\{\tau^*:\tau\in D^e(z,\eta,\eps)\} \subseteq D^i(z,\eta_1,\eps_1).$$
Applying area-integral inequality~(\ref{ineq:Luzin_internal}) we
obtain
\begin{multline*}
\intl_\dom d\sigma(z) \left(\intl_{D^e(z,\eta,\eps)} \abs{\dbar \f(\tau)
\rho(\tau)^{-l}}^2
 d\nu(\tau)
 \right)^{p/2}\\
 \lesssim \max\limits_{\abs{\alpha}=l+1} \intl_\dom d\sigma(z) \left(\intl_{D^e(z,\eta,\eps)} \abs{\partial^{\alpha}f(\tau^*)}^2
 d\nu(\tau)
 \right)^{p/2}\\
 \lesssim \max\limits_{\abs{\alpha}=l+1} \intl_\dom d\sigma(z) \left(\intl_{D^i(z,\eta_1,\eps_1)} \abs{\partial^{\alpha}f(\tau) }^2
 \frac{d\mu(\tau)}{(-\rho(\tau))^{n-1}}
 \right)^{p/2} <\infty
\end{multline*}

To prove the sufficiency, assume that function $f\in H^1(\O)$ admits
the pseudoanalytical continuation~$\f$ with the estimate~(\ref{ineq:PAC_Sobolev}.) We will prove that for every function
$g\in L^{p'}(\dom),\ \frac{1}{p}+\frac{1}{p'} =1,$ and every
multiindex $\alpha,\ \abs{\alpha}\leq l,$

$$ \abs{ \intl_\dom g(z) \partial^{\alpha}f(z) dS(z)} \leq c(f)\norm{g }_{L^{p'}(\dom)}.$$

Assume, without loss of generality, that $\alpha=(l,0,\ldots,0).$ By
representation~(\ref{eq:PAC}) we have

$$ f(z) = \intl_{\CO} \frac{\bar{\partial}\f(\xi) \wedge \omega(\xi)}{\scp{\partial\rho(\xi)}{\xi-z}^n}$$
and with $C_{nl} = \frac{(n+l-1)!}{(n-1)!}$
\begin{multline*}
 \intl_\dom g(z)\partial^{\alpha}f(z)  dS(z)\\
 = C_{nl} \intl_\dom g(z) \left( \intl_{\CO} \left(\frac{\partial\rho(\xi)}{\partial\xi_1}\right)^{l} \frac{\bar{\partial}\f(\xi) \wedge \omega(\xi)}{\scp{\partial\rho(\xi)}{\xi-z}^{n+l}}\right)
 dS(z)\\
 = C_{nl} \intl_{\CO} \left(\frac{\partial\rho(\xi)}{\partial\xi_1}\right)^{l} \bar{\partial}\f(\xi) \wedge
 \omega(\xi) \intl_\dom
 \frac{g(z)dS(z)}{\scp{\partial\rho(\xi)}{\xi-z}^{n+l}}.
\end{multline*}
Define $\Phi_l(\xi) = \intl_\dom
 \frac{g(z)dS(z)}{\scp{\partial\rho(\xi)}{\xi-z}^{n+l}}. $
Applying H\"{o}lder inequality twice we have
\begin{multline*}
 \abs{\intl_\dom g(z) \partial^\alpha f(z) dS(z)} \lesssim
 \intl_{\CO} \abs{\dbar \f(\xi)} \abs{\Phi_l(\xi)} d\mu(\xi)\\
 \lesssim \intl_\dom dS(\xi) \intl_{D^e(\xi,\eta,\eps)} \abs{ \dbar \f(\tau) }
 \abs{\Phi_l(\tau)} \frac{d\mu(\tau)}{\rho(\tau)^n} \\
 \lesssim \intl_\dom d\sigma(\xi) \left(\intl_{D^e(\xi,\eta,\eps)} \abs{\dbar \f(\tau)
 }^2\rho(\tau)^{-2l} \frac{d\mu(\tau)}{\rho(\tau)^{n-1}}\right)^{1/2} \times\\
 \times\left(\intl_{D^e(\xi,\eta,\eps)} \abs{\Phi_l(\tau)}^2 \rho(\tau)^{2l-2}
 \frac{d\mu(\tau)}{\rho(\tau)^{n-1}}\right)^{1/2}\\
 \lesssim \left( \intl_\dom dS(\xi) \left(\intl_{D^e(\xi,\eta,\eps)} \abs{\dbar \f(\tau)}^2\rho(\tau)^{-2l}
 d\nu(\tau)\right)^{p/2}\right)^{1/p} \times\\ \times
 \left( \intl_\dom dS(\xi)  \left(\intl_{D^e(\xi,\eta,\eps)} \abs{\Phi_l(\tau)}^2 \rho(\tau)^{2l-2}
 d\nu(\tau)\right)^{p'/2}\right)^{1/p'}.
\end{multline*}
The first product term is bounded by~(\ref{ineq:PAC_Sobolev}), and
the second one by the area-integral inequality~(\ref{est:area_int}),
that we will prove in the appendix~\ref{Area_int} in
theorem~\ref{thm:area_int}. \qed
\end{proof}

\section{Constructive description of Hardy-Sobolev spaces \label{Poly_Sobolev}}

\begin{theorem} \label{thm:Poly_Sobolev} Let $f\in H^1(\O)$ and $1<p<\infty,\
l\in\N.$ Then  $f\in H^l_p(\O)$ iff there exists sequence of $2^k$-degree polynomials $P_{2^k}$ such that
\begin{equation} \label{ineq:Poly_Sobolev}
 \intl_{\dom} d\sigma(z) \left(\SK \abs{f(z)-P_{2^k}(z)}^2 2^{2l k}
 \right)^{p/2} < \infty.
\end{equation}
\end{theorem}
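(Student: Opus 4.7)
The two directions will be handled by combining the three tools already set up: the polynomial approximation of the Cauchy--Leray--Fantappi\`{e} kernel (Lemma~\ref{lm:CLF_approx}), the continuation theorems (Theorems~\ref{thm:PAC_sym} and~\ref{thm:PAC_glob}), and the $H^l_p$--characterization via pseudoanalytic continuation (Theorem~\ref{thm:PAC_Sobolev}).

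\emph{Sufficiency.} Assume $(P_{2^k})$ satisfies (\ref{ineq:Poly_Sobolev}). On $\dom$ the telescoping estimate $|P_{2^{k+1}}-P_{2^k}|\le|f-P_{2^{k+1}}|+|f-P_{2^k}|$ transfers~(\ref{ineq:Poly_Sobolev}) to the sum $\suml_k 2^{2lk}|P_{2^{k+1}}-P_{2^k}|^2$. I would then invoke Theorem~\ref{thm:PAC_glob} to build a pseudoanalytic continuation $\f$ with $|\dbar\f(z)|\lesssim\lambda(z)$, where on the shell $\{2^{-k}<\rho(z)\le 2^{-k+1}\}$ one has $\lambda(z)\asymp 2^k|P_{2^{k+1}}(z)-P_{2^k}(z)|$, and verify~(\ref{ineq:PAC_Sobolev}). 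Using the measure rule $d\nu=d\mu/\rho^{n-1}$ together with the second integration rule for $D^e(z,\eta,\eps)$, the inner integral of $|\dbar\f(\tau)|^2\rho(\tau)^{-2l}$ reduces to $\suml_k 2^{2lk}\,\bigl(\tfrac1{|B_k|}\intl_{S_k(z)}|P_{2^{k+1}}-P_{2^k}|^2\bigr)$ where $S_k(z)$ is the shell piece of $D^e(z,\eta,\eps)$ at scale $2^{-k}$. The crucial point is to replace the average of the polynomial $Q_k=P_{2^{k+1}}-P_{2^k}$ over $S_k(z)$ by a Hardy--Littlewood maximal average of $|Q_k|$ over the quasi-ball $B(z,c\,2^{-k})\subset\dom$: this is a Bernstein-type inequality on a space of homogeneous type, where the fact that $Q_k$ has degree $\lesssim 2^{k+1}$ and that $\rho\sim 2^{-k}$ in $S_k(z)$ conspires so that $Q_k$ does not vary much in quasi-distance $2^{-k}$. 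Once this substitution is made, boundedness of the Hardy--Littlewood maximal operator on $L^{p/2}(\ell^1)$, or equivalently the Fefferman--Stein vector-valued maximal inequality, yields the desired finiteness from the boundary hypothesis. Theorem~\ref{thm:PAC_Sobolev} then gives $f\in H^l_p(\O)$.

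\emph{Necessity.} Starting from $f\in H^l_p(\O)$, apply Theorem~\ref{thm:PAC_sym} with $m=l+1$ to obtain a pseudoanalytic continuation $\f$ with $|\dbar\f(z)|\lesssim\max_{|\alpha|=l+1}|\partial^\alpha f(z^*)|\,\rho(z)^l$ and compact support. Insert the polynomial kernel $K^{glob}_{2^k}$ of Lemma~\ref{lm:CLF_approx} in~(\ref{eq:PAC}) and set
$$P_{2^k}(z)=\cn\intl_{\CO}\dbar\f(\xi)\wedge\omega(\xi)\,K^{glob}_{2^k}(\xi,z),$$
a polynomial in $z$ of degree $\lesssim 2^k$. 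Then
$$|f(z)-P_{2^k}(z)|\lesssim\intl_{\CO}|\dbar\f(\xi)|\,|K-K^{glob}_{2^k}|(\xi,z)\,d\mu(\xi),$$
which I split along $d(\xi,z)\gtrless 2^{-k}$ and bound via~(\ref{eq:CLF_approx1})--(\ref{eq:CLF_approx2}). Slicing $\CO$ into shells $\{\rho\asymp 2^{-j}\}$, applying Lemma~\ref{lm:QM_est2} so that $d(\xi,z)\asymp\rho(\xi)+d(\pr_{\dom}\xi,z)$, and inserting back into $\bigl(\suml_k 2^{2lk}|f-P_{2^k}|^2\bigr)^{p/2}$, a Littlewood--Paley/Schur argument reduces the boundary $L^{p/2}$ norm of this square function to a boundary $L^{p/2}$ norm of an area integral of $|\dbar\f|^2\rho^{-2l}$ over external Kor\'anyi regions. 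Using the symmetry $\tau\mapsto\tau^*$ as in the proof of Theorem~\ref{thm:PAC_Sobolev}, this is further controlled by the internal area-integral~(\ref{ineq:Luzin_internal}) applied to $\partial^\alpha f$ with $|\alpha|=l+1$, hence by $\|f\|_{H^l_p(\O)}$.

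\emph{Main obstacle.} The delicate step is the Littlewood--Paley summation in the necessary direction: one must choose the approximation order $r$ in Lemma~\ref{lm:CLF_approx} so large that, after slicing $\CO$ into dyadic shells and controlling $|K-K^{glob}_{2^k}|$ by $2^{-kr}(\rho(\xi)+d(\pr_{\dom}\xi,z))^{-n-r}$, the resulting bilinear kernel on the index pair $(j,k)$ and on the homogeneous space $(\dom,d,\sigma)$ gives a geometric series in $2^{|j-k|}$ and passes a Schur test; then the Fefferman--Stein vector-valued maximal inequality on $(\dom,d,\sigma)$ turns the pointwise square-function estimate into the required $L^p(\ell^2)$ bound. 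The sufficiency hinges symmetrically on the Bernstein-type transfer from $S_k(z)$ to $B(z,c\,2^{-k})\subset\dom$, which, although standard, must be made uniform in $k$ and compatible with the geometry of $D^e(z,\eta,\eps)$.
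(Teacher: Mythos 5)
Your outline coincides, in both directions, with the paper's own argument: sufficiency via Theorem~\ref{thm:PAC_glob}, the shell decomposition of $D^e(z,\eta,\eps)$, a maximal-function comparison, the Fefferman--Stein inequality on $(\dom,d,\sigma)$ and Theorem~\ref{thm:PAC_Sobolev}; necessity via Theorem~\ref{thm:PAC_sym}/\ref{thm:PAC_Sobolev}, the polynomials $P_{2^k}(z)=\int_{\CO}\dbar\f(\xi)\wedge\omega(\xi)K^{glob}_{2^k}(\xi,z)$, the splitting according to $d(\xi,z)\gtrless 2^{-k}$ and $\rho(\xi)\gtrless 2^{-k}$, Lemma~\ref{lm:QM_est2}, a sufficiently large approximation order (the paper takes $r=2l$), and again Fefferman--Stein. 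So the route is the same; the issue is the one step you set aside as ``standard.''

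That step --- replacing the average of $Q_k=P_{2^{k+1}}-P_{2^k}$ over the exterior shell piece $D_k(z)$ by a maximal average of $|Q_k|$ over $B(z,c2^{-k})\subset\dom$, i.e.\ $b_k(z)\lesssim Ma_k(z)$ --- is exactly Lemma~\ref{lm:Poly_Sobolev}, and it is the technical heart of the sufficiency direction, not a citable Bernstein-type fact on a space of homogeneous type. Your heuristic (``$Q_k$ has degree $\lesssim 2^{k+1}$ and $\rho\sim 2^{-k}$, so $Q_k$ does not vary much at quasi-distance $2^{-k}$'') is not the right mechanism: the points of $D_k(z)$ lie \emph{outside} $\overline\O$, where no maximum principle compares $|Q_k|$ with boundary data, and smallness of oscillation is neither what is needed nor what degree control alone gives there. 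The paper's proof works on the one-dimensional normal slices $T^{\perp}_{z,\tau}$, divides $Q_k$ by $\varphi_{z,\tau}^{2^{k+1}}$ (the exterior conformal map, so that $|\varphi_{z,\tau}|^{2^{k+1}}\asymp1$ at distance $\sim2^{-k}$), applies Dyn'kin's maximal estimate \cite{D77} for analytic functions outside a Jordan curve, and then recovers the $(2n-2)$ tangential directions by a sub-mean-value argument over a ball of radius $\sqrt{\rho(\tau)}$ in $T_z$, with uniformity in $z,\tau$ coming from the $C^3$-smoothness of $\dom$. Without an argument of this type your sufficiency half is incomplete. A smaller point: the reduction to ``boundedness of the Hardy--Littlewood maximal operator on $L^{p/2}(\ell^1)$'' is not equivalent to what is needed and fails for $1<p\le2$ (one only has $(Ma_k)^2\le M(a_k^2)$, and $L^{p/2}$ is then not a Banach range for the vector-valued maximal theorem); you must invoke the Fefferman--Stein inequality in its $L^p(\ell^2)$ form applied to $Ma_k$, which you do also name and which is what the paper uses.
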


\begin{proof}
Assume that condition~(\ref{ineq:Poly_Sobolev}) holds, then
polynomials $P_{2^k}$ converge to function $f$ in $L^p(\dom)$ and by
the theorem~\ref{thm:PAC_glob} we could construct pseudoanalytical
continuation~$\f$ such that

$$ \abs{\dbar \f(z)} \lesssim \abs{P_{2^{k+1}}(z)-P_{2^k}(z)}
\rho(z)^{-1},\quad z\in \CO,\ 2^{-k}\leq \rho(z)<2^{-k+1}.$$

Consider the decomposition of region $D^e(z,\eta,\eps)$ to sets $D_k(z) =
\{\tau\in D^e(z,\eta,\eps):\ 2^{-k}\leq \rho(\tau) < 2^{-k+1}\},$ and
define functions
\begin{align*}
a_k(z) &= \abs{P_{2^{k+1}}(z)-P_{2^k}(z)} 2^{-kl },\\
b_k (z) &= \left(\intl_{D_k(z)} \abs{\dbar
\f(\tau)\rho(\tau)^{-l}}^2 d\nu(\tau) \right)^{1/2}, z\in\dom.
\end{align*}

\begin{lemma}\label{lm:Poly_Sobolev} $b_k(z) \lesssim M a_k (z),$
where $Ma_k$ is the maximal function with respect to centred quasiballs on $\dom$ $$Ma_k(z) = \sup\limits_{r>0}\frac{1}{\sigma(B(z,r))} \int\limits_{B(z,r)} |a_k(\xi)| d\sigma(\xi).$$
\end{lemma}

Assume, that this lemma holds, then by Fefferman-Stein maximal
theorem (see \cite{GLY04}, \cite{FS72}) we have

$$ \intl_\dom \left(\SK b_k(z)^2\right)^{p/2} d\sigma(z) \lesssim  \intl_\dom \left(\SK a_k(z)^2\right)^{p/2}d\sigma(z).$$
The right-hand side of this inequality is finite by the
condition~(\ref{ineq:Poly_Sobolev}), also we have
$$ \SK b_k(z)^2 = \intl_{D^e(z,\eta,\eps)} \abs{\dbar \f(\xi) \rho(\xi)^{-l}}^2 d\nu(\xi), $$
which completes the proof of the sufficiency in the theorem.

Prove the necessity. Now $f\in H^l_p(\O)$ with $1<p<\infty$ and
$l\in\N.$ By theorem~\ref{thm:PAC_Sobolev} we could construct
continuation~$\f$ of function~$f$ with
estimate~(\ref{ineq:PAC_Sobolev}). Applying the approximation of
Cauchy-Leray-Fantappi\`{e} kernel from lemma~\ref{lm:CLF_approx} to
function $\f$ we define polynomials
$$ P_{2^k}(z) = \int_{\CO}
\bar{\partial} \f(\xi)\wedge\omega(\xi) K^{glob}_{2^k}(\xi,z).$$ We
will prove that these polynomials satisfy the
condition~(\ref{ineq:Poly_Sobolev}). From lemma~\ref{lm:CLF_approx}
we obtain
\begin{multline*}
 \abs{f(z)-P_{2^k}(z)} \lesssim \int_{\CO} \abs{\bar{\partial}\f(\xi)} \abs{\frac{1}{\scp{\partial{\rho(\xi)}}{\xi-z}^d} - K^{glob}_{2^k}(\xi,z) } d\mu(\xi)\\ \lesssim U(z) + V(z) + W_1(z) +
 W_2(z),
\end{multline*}
where
\begin{align*}
U(z) &= \int\limits_{d(\tau,z)<2^{-k}} \frac{ \abs{\bar{\partial}\f(\tau)} }{ \abs{ \scp{\partial{\rho(\tau)}}{\tau-z} }^n }d\mu(\tau), \\
V(z) &= 2^{kn} \int\limits_{d(\tau,z)<2^{-k}} \abs{ \bar{\partial}\f(\tau) } d\mu(\tau), \\
W_1(z) &= 2^{-kr}  \int\limits_{\substack{d(\tau,z)>2^{-k}\\ \rho(\tau)<2^{-k}}} \frac{ \abs{\bar{\partial}\f(\tau)} }{ \abs{\scp{\partial{\rho(\tau)}}{\tau-z}}^{n+r} }d\mu(\tau),\\
W_2(z) &= 2^{-kr} \int\limits_{\rho(\tau)>2^{-k}} \frac{
\abs{\bar{\partial}\f(\tau)} }{
\abs{\scp{\partial{\rho(\tau)}}{\tau-z}}^{n+r} }d\mu(\tau).
\end{align*}
The parameter $r>0$ will be chosen later.

Note that $ V(z) \lesssim c U(z)$ and estimate the contribution of
 $U(z)$ to the sum. For some $c_1,c_2>0$ we have

\begin{multline*}
U(z) \leq \intl_{\substack{d(w,z)<c_1 2^{-k}\\ w\in\dom}} d\sigma(w)
\sum\limits_{j>c_2 k} \intl_{D_j(w)} \frac{ \abs{\dbar \f(\tau)} }{
\abs{\scp{\partial{\rho(\tau)}}{\tau-z}}^n }
\frac{d\nu(\tau)}{\rho(\tau)}\\
\leq \intl_{\substack{d(w,z)<c_1 2^{-k}\\ w\in\dom}} d\sigma(w)
\sum\limits_{j>c_2 k} \left(\intl_{D_j(w)} \abs{\dbar
\f(\tau)\rho(\tau)^{-l}}^2 d\nu(\tau)\right)^{1/2}\times\\
\times\left(\intl_{D_j(w)}  \frac{ \rho(\tau)^{2(l-1)} d\nu(\tau) }{ \abs{\scp{\partial{\rho(\tau)}}{\tau-z}}^n }\right)^{1/2}
= \sum\limits_{j>c_2 k} \intl_{d(w,z)<c_1 2^{-j}} b_j(w) m_j(w)
d\sigma(w)
\end{multline*}
Consider the integral~$m_j(w).$ Since $\tau\in D_j(w)$ then by
estimates from lemma~\ref{lm:QM_est2} $d(\tau,z) \asymp \rho(\tau) +
d(w,z)> 2^{-j}$ and
\begin{equation}
 m_j(w) = \left(\intl_{D_j(w)}  \frac{ \rho(\tau)^{2(l-1)} d\nu(\tau) }{ \abs{\V{\tau}{z}}^n }\right)^{1/2} \lesssim \frac{2^{-j(l-1)}}{2^{-jn}} 2^{-j} = 2^{jn-jl}.
\end{equation}
Thus
\begin{equation} \label{est_U}
2^{kl} U(z) \lesssim \sum\limits_{j>c_1 k} 2^{-(j-k)l}
2^{jn}\intl_{d(w,z)< c_2 2^{j} } b_j(w) d\sigma(w)
 \lesssim  \sum\limits_{j>c_1 k} 2^{-(j-k)l} Mb_j(z).
\end{equation}

Now estimate the value $W_1(z).$ Similarly to the previous we have
\begin{multline*}
W_1(z) \leq 2^{-kr} \suml_{j>k} \intl_{ d(w,z)\geq c_1 2^{-k} }
b_j(w) m_j^r(w) d\sigma(w)\\
\leq 2^{-kr} \suml_{j>k} \suml_{t=c_2}^k \intl_{ c_12^{-t}\leq d(w,z)\leq c_1 2^{-t+1} }
b_j(w) m_j^r(w) d\sigma(w),
\end{multline*}
where
\begin{equation*}
 m_j^r(w) =\left( \intl_{ D_j(w) } \frac{ \rho(\tau)^{ 2(l-1) } d\nu(\tau) }{ \abs{\V{\tau}{z}}^{2(n+r)} } \right)^{1/2}.
\end{equation*}
Applying the estimate $d(\tau,z) \asymp \rho(\tau) + d(w,z)\gtrsim
2^{-t},$ we obtain
\begin{equation*}
 m_j^r(w) \lesssim 2^{-jl+t(n+r)} .
\end{equation*}
Finally
\begin{equation*}
\suml_{t=c_2}^k \intl_{ d(w,z)\leq c_1 2^{-t+1} }
b_j(w) m_j^r(w) d\sigma(w) \lesssim \suml_{t=c_2}^k 2^{-jl+tr}Mb_j(z)\lesssim 2^{-jl+kr} Mb_j(z) 
\end{equation*}
and
\begin{equation} \label{est_W1}
 2^{kl} W_1(z) \lesssim \sum\limits_{j>k}  2^{-l(j-k)} Mb_j(z).
\end{equation}

Similarly, estimating the contribution of $W_2(z),$ we obtain

\begin{equation}
2^{kl}W_2(z) \lesssim 2^{-k(r-l)} \sum\limits_{j=0}^k \intl_\dom
b_j(w) m_j^r(w) d\sigma(w).
\end{equation}
Since $d(\tau,z) \gtrsim 2^{-j}+d(w,z)$ for $\tau\in\dom,\ \tau\in
D_j(z)$ then
$$m_j^r(w) \lesssim \frac{2^{-jl}}{(2^{-j} + d(w,z))^{n+r}}\leq \min\left(2^{j(n+r-l)}, 2^{-jl}d(w,z)^{-n-r}\right) .$$
Thus
\begin{multline*}
 \intl_\dom b_j(w) m_j^r(w) d\sigma(w) \lesssim \intl_{d(w,z)\leq
 2^{-j}} \frac{ 2^{-jl} }{2^{-j(n+r)}} b_j(w) d\sigma(w)\\ +\sum\limits_{t=1}^{j-1}\intl_{2^{-t-1}\leq d(w,z)\leq
 2^{-t}} \frac{ 2^{-jl} }{2^{-t(n+r)}} b_j(w) d\sigma(w)\\
 \lesssim \sum\limits_{t=1}^{j} 2^{-jl} 2^{tr} M b_j(z) \lesssim
 2^{-jl} 2^{j r} M b_j(z).
\end{multline*}
Choosing $r=2l,$ we have
\begin{equation} \label{est_W2}
 W_2(z) 2^{kl} \lesssim \sum\limits_{j=1}^k 2^{-(k-j)(r-l)} M
 b_j(z)\leq\sum\limits_{j=1}^k 2^{-(k-j)l} M
 b_j(z).
\end{equation}

Combining the estimates~(\ref{est_U},~\ref{est_W1},~\ref{est_W2}) we
finally obtain
$$\abs{f(z)-P_{2^k}(z)}2^{kl} \lesssim \sum\limits_{j=1}^k 2^{-(k-j)l} M
 b_j(z) + \sum\limits_{j>k}  2^{-(j-k) l} M b_j(z), $$
which similarly to \cite{D81} implies
$$ \SK \abs{f(z)-P_{2^k}(z)}^2  2^{2kl} \lesssim \SK (M b_k(z))^2. $$
Then, by Fefferman-Stein theorem
\begin{multline*}\intl_{\dom} d\sigma(z) \left(\SK \abs{f(z)-P_{2^k}(z)}^2 2^{2l k}
 \right)^{p/2}\leq \intl_{\dom} \left(\SK b^2_k(z)\right)^{p/2}d\sigma(z)\\ \leq \intl_{\dom} d\sigma(z) \left( \intl_{D^e(z,\eta,\eps)} \abs{\dbar \f(\xi) \rho(\xi)^{-l}}^2 d\nu(\xi) \right)^{p/2}<\infty.
 \end{multline*}
This  completes the proof of the theorem and it remains to prove
lemma~\ref{lm:Poly_Sobolev}. \qed
 \end{proof}

\begin{proof}[of the lemma~\ref{lm:Poly_Sobolev}]. Define
$g_k(z) := 2^{-kl}(P_{2^{k+1}}(z) - P_{2^k}(z)).$

Let $z\in\dom$ and $\tau\in S_k(z).$ Consider complex normal vector
$n(z) = \frac{\bar\partial\rho(z)}{ \abs{\bar\partial\rho(z)} }$ at~$z,$
complex tangent hyperplane $T_z=\{w\in\C^n:
\scp{\partial\rho(z)}{w-z}=0\}$ and complex plane
$T_{z,\tau}^{\perp}$, orthogonal to $T_z$ and containing the point
$\tau$
$$T_{z,\tau}^{\perp} := \{\tau+s n(z): s\in\C\}.$$
Projection of vector $\tau\in\C$ to $\dom\bigcap T_{z,\tau}^{\perp}$
we will denote as $\pi_z(\tau).$

Define $\O_{z,\tau} = \O\bigcap T_{z,\tau}^{\perp}$ and
$\gamma_{z,\tau}=\dom_{z,\tau}.$ There exist a conformal map\\
$\varphi_{z,\tau}:T_{z,\tau}^{\perp}\setminus\O_{z,\tau} \to
\C\setminus\{w\in\C: \abs{w}=1\}$ such that $\
\varphi_{z,\tau}(\infty)=\infty,\ \varphi_{z,\tau}'(\infty)>0, $ and
we could consider analytical in
$T_{z,\tau}^{\perp}\setminus\O_{z,\tau} $ function
$G_k(s):=\frac{g_k(s)}{\varphi_{z,\tau}^{2^{k+1}}(s)}.$

Applying to function $G_k$ Dyn'kin maximal estimate from~\cite{D77}
for domain $T_{z,\tau}^{\perp}~\setminus~\O(z,\tau)$ we obtain the
estimate
$$ \abs{G_k(\tau)}\lesssim  \frac{1}{\rho(\tau)} \intl_{s\in I_{z,\tau}}\abs{G_k(s)} \abs{ds} + \intl_{\dom_{z,\tau}\setminus I_{z,\tau}} \abs{G_k(s)}\frac{\rho(\tau)^m}{\abs{s-\pi_z(\tau)}^{m+1}} \abs{ds}, $$
where $I_{z,\tau} = \{s\in\gamma_{z,\tau}: \abs{s-\pi_z(\tau)}<
\dist{\tau}{\dom_{z,\tau}}/2\},$ and $m>0$ could be chosen arbitrary
large.

Note that $\abs{\varphi_{z,\tau}(s)}-1\asymp
\dist{s}{\dom_{z,\tau}}\asymp 2^{-k}, $ thus $\abs{g_k(s)}\asymp
\abs{G_k(s)}$ for $s\in D_k(z)\bigcap T_{z,\tau}^{\perp}.$ Hence,
\begin{equation} \label{lm:Poly_Sobolev_ineq1}
\abs{g_k(\tau)} \lesssim \suml_{j=1}^\infty 2^{-jm}
\frac{1}{2^j\rho(\tau)} \intl_{\substack{s\in\dom_{z,\tau}\\
\abs{s-\pi_z(\tau)}<2^j\rho(\tau)}} \abs{g_k(s)} \abs{ds}.
\end{equation}

Since the boundary of the domain $\O$ is $C^3$-smooth, we can assume
that the constant in this inequality~(\ref{lm:Poly_Sobolev_ineq1})
does not depend on $z\in\dom$ and $\tau\in\OO.$

Note that function $g_k(\tau+z-w)$ is holomorphic in $w\in T_z,$
then estimating the mean we obtain
\begin{multline}
 \abs{g_k(\tau)}\leq \frac{1}{\rho(\tau)^{n-1}} \intl_{\abs{w-z}<\sqrt{\rho(\tau)}} \abs{g_k(\tau+z-w)} d\mu_{2n-2}(w)\\
 \lesssim \suml_{j=1}^\infty 2^{-jm} \frac{1}{\rho(\tau)^{n-1}} \intl_{\abs{w-z}<\sqrt{\rho(\tau)}} \frac{d\mu_{2n-2}(w) }{2^j\rho(\tau)} \intl_{\substack{s\in\dom_{z,\tau}\\ \abs{s-\pi_z(\tau+z-w)}<2^j\rho(\tau)}} \abs{g_k(s)} \abs{ds}\\ \lesssim \suml_{j=1}^\infty
 2^{-j(m-n+1)} \intl_{B(z,2^j\rho(\tau))} \abs{g_k(w)}d\sigma(w),
\end{multline}
where $d\mu_{2n-2}$ is Lebesgue measure in $T_z$

Assume that $m>n-1,$ then $ \abs{g_k(\tau)} \lesssim M g_k(z),\
z\in\dom,\ \tau\in D_k(z).$ Finally,
\begin{multline*}
b_k(z) = \intl_{D_k(z)} \abs{\dbar \f(\tau)\rho(\tau)^{-l}}^2
d\nu(\tau) 
\lesssim \intl_{D_k(z)} \abs{
g_k(\tau)\rho(\tau)^{-l-1}}^2 d\nu(\tau)\\ \lesssim \left(M
a_k(z)\right)^{2} \intl_{D_k(z)} \frac{d\nu(\tau)}{\rho(\tau)^2}
\lesssim \left(Ma_k(z)\right)^{2}
\end{multline*}
and the lemma is proved.\hfill $\Box$
\end{proof}


\begin{appendix}
\renewcommand*{\thesection}{\Alph{section}}

\section{Area-integral inequality for external Kor\'{a}nyi region
}\label{Area_int} \numberwithin{theorem}{section}

Let $\O\subset\Cn$ be a strongly convex domain and $\eta,\eps>0$. For
function $g\in L^1(\dom)$ and $l\in\N$ we define a function
\begin{equation} \label{eq:area_int}
I_l(g,z) = \left(\ \intl_{D^e(z,\eta,\eps)} \abs{\ \intl_\dom \frac{ g(w)
dS(w)}{\V{\tau}{w}^{n+l}} }^2 d\nu_l(\tau) \right)^{1/2},
\end{equation}
where $dS(w) =\cn
\partial\rho(w)\wedge(\bar{\partial}\partial\rho(w))^{n-1}$ (see (\ref{CLF})) and  $d\nu_l(\tau) = \frac{d\mu_{2n}(\tau)}{\rho(\tau)^{n-2l-1}}.$

\begin{theorem}\label{thm:area_int}
Let $\O$ be strongly convex domain and $g\in L^p(\dom),\
1<p<\infty,$ Then
\begin{equation} \label{est:area_int}
    \intl_\dom I_l(g,z)^p d\sigma(z)  \lesssim \intl_\dom \abs{g(z)}^p
d\sigma(z).
\end{equation}

\end{theorem}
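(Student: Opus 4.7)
My plan is to prove the stronger pointwise bound $I_l(g,z)\lesssim Mg(z)$ for $z\in\dom$, where $M$ denotes the Hardy--Littlewood maximal operator on the space of homogeneous type $(\dom,d,\sigma)$. Since $M$ is bounded on $L^p(\dom,d\sigma)$ for every $1<p<\infty$, the theorem will follow at once from this pointwise majorization.

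The first ingredient is a pointwise kernel bound. By lemma~\ref{lm:QM_est2}, for $\tau\in D^e(z,\eta,\eps)$ and $w\in\dom$ one has $\abs{\V{\tau}{w}}\asymp\rho(\tau)+d(z,w)$, so the inner integral defining $I_l$ is majorized by $\intl_\dom \abs{g(w)}(\rho(\tau)+d(z,w))^{-(n+l)}d\sigma(w)$. I will decompose $\dom$ into the $d$-ball $B(z,\rho(\tau))$ and the dyadic annuli $B(z,2^{j+1}\rho(\tau))\setminus B(z,2^j\rho(\tau))$ for $j\geq 0$, and use the Ahlfors regularity $\sigma(B(z,\delta))\asymp\delta^n$ together with the definition of $Mg(z)$. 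The $j$-th piece will contribute at most $(2^j\rho(\tau))^{-l}Mg(z)$; summing the resulting geometric series, which converges because $l\geq 1$, gives
\[
\abs{\intl_\dom \frac{g(w)\,dS(w)}{\V{\tau}{w}^{n+l}}}\lesssim\rho(\tau)^{-l}Mg(z),\qquad \tau\in D^e(z,\eta,\eps).
\]

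It then remains to integrate the square of this bound against $d\nu_l$ over $D^e(z,\eta,\eps)$. Since $\rho(\tau)^{-2l}d\nu_l(\tau)=d\mu(\tau)/\rho(\tau)^{n-1}$ depends only on $\rho(\tau)$, the second integration rule for $D^e$-regions recalled in section~\ref{CLF} reduces the integral to $\intl_0^\eps t^{-(n-1)}\cdot t^n\,dt\asymp\intl_0^\eps t\,dt$, which is finite. This gives the pointwise estimate $I_l(g,z)\lesssim Mg(z)$; raising to the $p$th power, integrating over $\dom$, and invoking the Hardy--Littlewood maximal inequality on $(\dom,d,\sigma)$ then finishes the proof.

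The only delicate point is the dyadic decomposition: the geometric series converges precisely because $l\geq 1$ and because the ball-volume exponent $n$ cancels $n$ of the $n+l$ powers in the kernel. Everything else reduces to tools already set up in the paper; in particular, no refined tent-space or good-$\lambda$ technology is needed, because for $\tau\in D^e(z,\eta,\eps)$ the quantity $\abs{\V{\tau}{w}}$ stays bounded below by $\rho(\tau)$, so the kernel never becomes truly singular in $\tau$.
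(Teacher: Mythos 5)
Your first step is sound: for $\tau\in D^e(z,\eta,\eps)$, lemma~\ref{lm:QM_est2} gives $\abs{\V{\tau}{w}}\asymp\rho(\tau)+d(z,w)$, and the dyadic decomposition with $\sigma(B(z,\delta))\asymp\delta^n$ does yield $\abs{\intl_\dom g(w)\,dS(w)/\V{\tau}{w}^{n+l}}\lesssim\rho(\tau)^{-l}Mg(z)$. The gap is in the final integration, and it is decisive. You read the weight literally from~(\ref{eq:area_int}) as $d\nu_l=d\mu/\rho^{n-2l-1}$; but the measure the theorem must concern is $d\mu(\tau)/\rho(\tau)^{n-2l+1}$. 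That is the weight used throughout the paper's own proof (the computation of $\norm{K_j(z,w)}$ in lemma~\ref{lm:T1_1} and the definition of $L^2(D_0,d\nu_l)$ both carry $\RE(\tau_n)^{-(n-2l+1)}$), and, more importantly, it is exactly what the application in theorem~\ref{thm:PAC_Sobolev} requires: the Cauchy--Schwarz splitting there produces the factor $\bigl(\intl_{D^e(\xi,\eta,\eps)}\abs{\Phi_l(\tau)}^2\rho(\tau)^{2l-2}\,d\mu(\tau)/\rho(\tau)^{n-1}\bigr)^{1/2}$, i.e.\ the weight $\rho(\tau)^{-(n-2l+1)}$; the exponent $n-2l-1$ in~(\ref{eq:area_int}) is a misprint. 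With the correct weight your last step becomes $Mg(z)^2\intl_{D^e(z,\eta,\eps)}d\mu(\tau)/\rho(\tau)^{n+1}\asymp Mg(z)^2\intl_0^\eps dt/t=\infty$: the exponents sit exactly at the critical case, and the argument collapses at precisely the point you flag as the only delicate one.

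This failure is structural rather than a matter of sharpening constants. At the critical weight $I_l$ is a genuine area integral (square function), and its $L^p$ boundedness cannot be obtained from size estimates of the kernel alone, pointwise domination by $Mg$ included; one needs cancellation in $w$. That is why the paper's proof localizes via the partition of unity and the normal-form coordinates of theorem~\ref{thm:rhostandart}, views $\tau\mapsto\chi_j^{1/2}(z)J_j(z,\tau)\V{\psi_j(z,\tau)}{w}^{-(n+l)}$ as an operator-valued Calder\'on--Zygmund kernel with values in $L^2(D_0,d\nu_l)$, verifies the standard kernel estimates (lemma~\ref{lm:T1_1}), the cancellation $T_j1=0$ together with the bound on $T_j'1$ (lemma~\ref{lm:T1_2}) and the weak boundedness (lemma~\ref{lm:T1_3}), and only then concludes via the Hyt\"onen--Weis $T1$ theorem. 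Your argument does prove the inequality for the misprinted weight $\rho^{-(n-2l-1)}$, which leaves two powers of $\rho(\tau)$ to spare, but that weaker statement would not suffice for the proof of theorem~\ref{thm:PAC_Sobolev}, so it cannot be the intended content of the theorem.
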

Note that in the one-variable case the integral~(\ref{eq:area_int})
gives the holomorphic function and the result of the theorem follows
from~\cite{D81}.

\begin{definition} Assume, that defining function~$\rho$ for strongly convex domain $\O$
has the following form near $0\in\dom$
\begin{equation} \label{eq:rhostandart}
 \rho(z) = 2\RE(z_n) + \suml_{j,k=1}^n A_{jk} z_j \bar{z}_k + O(\abs{z}^3)
\end{equation}
with positive definite form $A_{jk} z_j \bar{z}_k.$ We define a set
\begin{multline}
 D_0(\eta,\eps) = \{ \tau\in\CO : \abs{\tau_1}^2+\ldots+\abs{\tau_{n-1}}^2 < \eta \RE(\tau_n),\\ \abs{\IM(\tau_n)} <\eta\RE(\tau_n),\ \abs{\RE(\tau_n)}<\eps\}.
\end{multline}
\end{definition}

\begin{lemma}\label{lemma:rhostandart}
 Suppose, that $\rho$ has the form (\ref{eq:rhostandart}). There exist constants $c,\eps_0>0$ such that
    \begin{equation*} 
        D^e(0,\eta,\eps)\subset D_0(c\eta,c\eps),\ D_0(\eta,\eps)\subset D^e(0,c\eta,c\eps)\ \text{for}\ 0<\eta,\eps<\eps_0.
    \end{equation*}
\end{lemma}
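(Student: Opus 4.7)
My plan is to show that in the standard coordinates the complex normal direction at the origin is $e_n$ and the complex tangent plane $T_0$ is the hyperplane $\{\tau_n=0\}$, so $D^e(0,\eta,\eps)$ and $D_0(\eta,\eps)$ differ only in whether one uses $\rho(\tau)$ or $\RE(\tau_n)$ in the size restrictions. Both inclusions will then follow once I establish the quantitative comparison $\rho(\tau)\asymp\RE(\tau_n)$ throughout each region, provided $\eta$ and $\eps$ are small enough.

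First I would read off from $\rho(z)=2\RE(z_n)+\sum A_{jk}z_j\bar z_k+O(|z|^3)$ that $\partial\rho(0)=dz_n$ and $\bar\partial\rho(0)=d\bar z_n$, whence $n(0)=e_n$ and $T_0=\{\tau:\tau_n=0\}$. Writing $\tau=w+t\,n(0)$ with $w\in T_0$ forces $w=(\tau',0)$, where $\tau'=(\tau_1,\dots,\tau_{n-1})$, and $t=\tau_n$; so the conditions in~(\ref{df:KoranyExt}) applied at $\xi=0$ become exactly $|\tau'|^2<\eta\rho(\tau)$, $|\IM\tau_n|<\eta\rho(\tau)$, $\rho(\tau)<\eps$, matching the form of $D_0(\eta,\eps)$ up to the replacement of $\rho(\tau)$ by $\RE(\tau_n)$.

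The core step would be the comparison of $\rho(\tau)$ and $\RE(\tau_n)$. Set $Q(\tau)=\sum A_{jk}\tau_j\bar\tau_k$ and let $\varphi$ denote the $O(|\tau|^3)$ remainder. Positive definiteness gives $\lambda_0|\tau|^2\le Q(\tau)\le M_0|\tau|^2$, and by shrinking to a neighbourhood $|\tau|<\delta_0$ I can arrange $|\varphi(\tau)|\le\tfrac{\lambda_0}{2}|\tau|^2$. Hence on that neighbourhood
\begin{equation*}
2\RE(\tau_n)+\tfrac{\lambda_0}{2}|\tau|^2\le\rho(\tau)\le 2\RE(\tau_n)+2M_0|\tau|^2.
\end{equation*}
For $\tau\in D^e(0,\eta,\eps)$ the defining conditions give $|\tau'|^2+|\IM\tau_n|^2\le\eta(1+\eta\eps)\rho(\tau)$; substituting this into the right-hand inequality, noting $(\RE\tau_n)^2\le\eps\,\RE\tau_n$ for $\RE\tau_n<\eps$, and absorbing the $\rho(\tau)$-term on the left for $\eta$ small, yields $\rho(\tau)\le C\RE(\tau_n)$. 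The left-hand inequality gives $\rho(\tau)\ge 2\RE(\tau_n)$, so $\rho(\tau)\asymp\RE(\tau_n)$ on $D^e(0,\eta,\eps)$. For $\tau\in D_0(\eta,\eps)$ with $r=\RE\tau_n$, the constraints directly imply $|\tau|^2\le 2r$ when $\eta,\eps$ are small, and plugging into the same double inequality yields $\rho(\tau)\asymp r$.

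Once this comparison is in hand, the two inclusions are immediate: each defining inequality of $D^e$ is rewritten in terms of $\RE\tau_n$ at the cost of a multiplicative constant coming from the comparison, and vice versa, producing the common constant $c$ in the statement, with $\eps_0$ chosen so that $|\tau|<\delta_0$ holds throughout both regions. I expect the main obstacle to be the absorption step giving $\rho(\tau)\le C\RE(\tau_n)$ on $D^e(0,\eta,\eps)$: one must treat the quadratic contribution $(\RE\tau_n)^2$ and the cross terms in $Q$ simultaneously, and this forces a single threshold $\eps_0$ depending on $\lambda_0$, $M_0$, and the cubic constant $C_1$ — but once that threshold is fixed the bootstrapping is routine.
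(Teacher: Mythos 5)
Your proposal is correct and follows essentially the same route as the paper: reduce both inclusions to the equivalence $\rho(\tau)\asymp\RE(\tau_n)$ on the two regions, obtained from the expansion $\rho=2\RE(\tau_n)+Q(\tau)+O(|\tau|^3)$ by absorbing the quadratic and cubic contributions for small $\eta,\eps$, exactly as in the paper's proof (which phrases the lower bound $2\RE(\tau_n)\le\rho(\tau)$ via convexity rather than positive definiteness of $A$). The only point you gloss over --- that on $D^e(0,\eta,\eps)$ one has $|\tau|$ small and $\RE(\tau_n)>0$ so the absorption and the bound $(\RE\tau_n)^2\le\eps\,\RE\tau_n$ are legitimate --- is glossed over in the paper as well.
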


\begin{proof} For the function
$\rho$ of the form~(\ref{eq:rhostandart}) the Kor\'{a}nyi sector (\ref{df:KoranyExt})
could be expressed as follows
\begin{multline*}
D^e(0,\eta,\eps) =  \{\tau\in \CO: \abs{\tau_1}^2+\ldots+\abs{\tau_{n-1}}^2\leq \eta\rho(\tau),\\ \abs{\IM(\tau_n)}\leq\eta\rho(\tau),\ \rho(\tau)<\eps\}
\end{multline*}
and
\begin{multline*}
\rho(\tau)\leq 2\RE(\tau_n) + c_0\left( \abs{\tau_1}^2+\ldots+\abs{\tau_{n-1}}^2 + \IM(\tau_n)^2 + \RE(\tau_n)^2\right)\\
\leq  (2+ c_0 \RE(\tau_n))\RE(\tau_n) + c_0(1+ \eta\rho(\tau))\eta\rho(\tau), \ \tau\in D^e(0,\eta,\eps) .
\end{multline*}
Thus for $\eta<\eta_0=\frac{1}{8c_0}$ we have $\rho(\tau)\leq c \RE(\tau_n).$

It is easy to see, that $|\tau|\to 0$ when $\rho(\tau)\to 0, \tau\in D^e(0,\eta,\eps).$  Then by convexity of $\O$
$$2\RE(\tau_n) =  \rho(\tau) - \suml_{j,k=1}^n A_{jk} \tau_j \bar{\tau}_k + O(\abs{\tau}^3)\leq \rho(\tau),\ \tau\in D^e(0,\eta,\eps_0) $$
for some $\eps_0\in(0,\eta_0).$

Finally $D^e(0,\eta,\eps)\subset D_0(c\eta,\eps)$ and analogously $D_0(\eta,\eps)\subset D^e(0,\eta,\eps)$ for $0<\eta,\eps<\eps_0.$ \qed

\end{proof}

\begin{theorem} \label{thm:rhostandart}
There exists such covering of the set 
$\overline{\O}_\eps\setminus\O_{-\eps}$ by open sets $\Gamma_j$ such
that for every $\xi\in\Gamma_j$ we can find a holomorphic change of
coordinates $\varphi_j(\xi,\cdot) : \Cn \to \Cn $ such that
\begin{enumerate}
    \item[{\rm{1.}}] The mapping $\varphi_j(\xi,\cdot)$
    transforms function $\rho$ to the type (\ref{eq:rhostandart})
    and could be expressed as follows
    \begin{equation}\label{thm:rhostandart:cond1}
    \varphi_j(\xi,z) = \Phi_j(\xi) (z-\xi) + (z-\xi)^\perp B_j(\xi) (z-\xi) e_n,
    \end{equation}
    where matrices  $\Phi_j(\xi), B_j(\xi)$ are $C^1$-smooth on
    $\Gamma_j,$ and $e_n=(0,\ldots,0,1).$
    \item[{\rm{2.}}] Let $\psi_j(\xi,\cdot)$ be an inverse map of $\varphi_j(\xi,\cdot),$ and let $J_j(\xi,\cdot)$ be a complex Jacobian of $\psi_j$.
    Then
    \begin{align} \label{thm:rhostandart:cond2}
     \sup\limits_{\tau\in\O_\eps\setminus\overline{\O}_\eps}
     \abs{J_j(\xi,\cdot) - J_j(\xi',\cdot)} &\lesssim \abs{\xi-\xi'},\\
     \sup\limits_{\tau\in\O_\eps\setminus\overline{\O}_\eps}
     \abs{\psi_j(\xi,\cdot) - \psi_j(\xi',\cdot)} &\lesssim \abs{\xi-\xi'}.\
    \end{align}
    Note that real Jacobian is then equal to $\abs{J_j(\xi,\cdot)}^2 = J_j(\xi,\cdot)\overline{J_j(\xi,\cdot)}.$
    \item[{\rm{3.}}] There exist constants $c,\eps_0>0$
    such that for $0<\eta,\eps<\eps_0$
    \begin{equation} \label{thm:rhostandart:cond3}
        \varphi_j(\xi,D^e(\xi,\eta,\eps))\subseteq D_0(c\eta,c\eps),\quad \psi_j(\xi,D_0(\eta,\eps)\subseteq
        D^e(\xi,c\eta,c\eps).
    \end{equation}
\end{enumerate}
\end{theorem}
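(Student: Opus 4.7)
The plan is to build $\varphi_j(\xi,\cdot)$ as a linear isomorphism sending $\partial\rho(\xi)$ into a multiple of the last coordinate, corrected by a holomorphic quadratic term in the $e_n$-direction that cancels the holomorphic quadratic part of the Taylor expansion of $\rho$ at $\xi$. Fix $\xi_0\in\overline{\O_\eps}\setminus\O_{-\eps}$ and a small open neighborhood $\Gamma_j$ of $\xi_0$; by compactness only finitely many such $\Gamma_j$ are needed to cover the shell. On $\Gamma_j$ write the $C^3$ Taylor expansion
$$\rho(\xi+u)=\rho(\xi)+2\RE\langle\partial\rho(\xi),u\rangle+\RE Q_\xi(u)+H_\xi(u,\bar u)+O(|u|^3),$$
where $Q_\xi(u)=\sum_{p,q}\partial_p\partial_q\rho(\xi)u_pu_q$ is the holomorphic Hessian and $H_\xi(u,\bar u)=\sum_{p,q}\partial_p\bar\partial_q\rho(\xi)u_p\bar u_q$ is the (positive definite) Levi form. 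Since $\rho\in C^3$, both $Q_\xi$ and $H_\xi$ are $C^1$ in $\xi$.

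Define $\Phi_j(\xi)$ as the matrix whose last row equals $\partial\rho(\xi)$ (viewed as a row covector) and whose first $n-1$ rows form a $C^1$-smooth family completing $\partial\rho(\xi)$ to a basis of $(\Cn)^*$ on $\Gamma_j$; the resulting linear isomorphism satisfies $(\Phi_j(\xi)u)_n=\langle\partial\rho(\xi),u\rangle$. Let $B_j(\xi)$ be the symmetric matrix with entries $(B_j(\xi))_{pq}=\tfrac12\partial_p\partial_q\rho(\xi)$, and set
$$\varphi_j(\xi,z)=\Phi_j(\xi)(z-\xi)+\bigl((z-\xi)^\perp B_j(\xi)(z-\xi)\bigr)e_n.$$
Inverting to second order gives $u=\psi_j(\xi,w)-\xi=\Phi_j(\xi)^{-1}w-\bigl((\Phi_j(\xi)^{-1}w)^\perp B_j(\xi)(\Phi_j(\xi)^{-1}w)\bigr)\Phi_j(\xi)^{-1}e_n+O(|w|^3)$, and substitution into the Taylor expansion, together with $\langle\partial\rho(\xi),\Phi_j(\xi)^{-1}e_n\rangle=1$, shows that the holomorphic term $\RE Q_\xi$ is exactly cancelled against a shift of $2\RE(w_n)$, producing the desired form (\ref{eq:rhostandart}) with Hermitian part $H_\xi(\Phi_j(\xi)^{-1}w,\overline{\Phi_j(\xi)^{-1}w})$. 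Because $\varphi_j$ is triangular in structure (the first $n-1$ components of $w$ are linear in $z-\xi$), the inverse $\psi_j(\xi,\cdot)$ exists globally as a polynomial of degree two in $w$.

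Smoothness and the Lipschitz bounds (\ref{thm:rhostandart:cond2}) follow immediately: $\Phi_j\in C^2(\Gamma_j)$ and $B_j\in C^1(\Gamma_j)$ since $\rho\in C^3$, so the explicit polynomial formulas for $\psi_j(\xi,\cdot)$ and its Jacobian $J_j(\xi,\cdot)$ are $C^1$ jointly in $(\xi,w)$, and the mean value theorem on the compact shell produces the claimed Lipschitz estimates. For the Kor\'anyi inclusions (\ref{thm:rhostandart:cond3}), observe that $\Phi_j(\xi)$ sends $T_\xi$ to $\{w_n=0\}$ and $n(\xi)$ to a multiple of $e_n$, while $\rho(\tau)\asymp\RE(\Phi_j(\xi)(\tau-\xi))_n$ on $D^e(\xi,\eta,\eps)$ and $|\tau-\xi|^2\lesssim\rho(\tau)$ there. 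Thus the defining inequalities of $D^e(\xi,\eta,\eps)$ translate (up to multiplicative constants) into those of $D_0(c\eta,c\eps)$, and the quadratic correction $(u^\perp B_j u)e_n$ contributes only $O(\rho(\tau))$, absorbed by inflating $c$. Lemma~\ref{lemma:rhostandart} then yields both inclusions of (\ref{thm:rhostandart:cond3}). The main obstacle I foresee is this last geometric step: the anisotropic mixing of tangential ($\sqrt\rho$) and normal ($\rho$) scales must be tracked carefully under the quadratic perturbation, with constants uniform across the finite covering.
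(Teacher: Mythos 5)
Your construction is essentially the paper's own: a linear map $\Phi_j(\xi)$ sending $\partial\rho(\xi)$ to the last coordinate followed by a quadratic shear in the $e_n$-direction cancelling the holomorphic Hessian (Range's normalization), with $\Phi_j$ chosen $C^1$ on the sets of a finite covering so that conditions 1--2 follow from smoothness of the explicit formulas, and condition 3 reduced to Lemma~\ref{lemma:rhostandart}; the only difference is that you perform the shear in the original coordinates rather than in the intermediate coordinates $z'$, which is the same map after conjugating the quadratic form by $\Phi_j$. One side remark is inaccurate: the map is not "triangular", since the quadratic form also involves the normal component (already in the model case $w=v+bv^2$ the map is not injective on $\C$), so $\psi_j$ is neither polynomial nor globally defined; but only invertibility on a fixed neighborhood of the shell is needed, and that follows from the nonvanishing Jacobian at $\xi$ uniformly over the compact covering, so this does not affect the argument.
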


\begin{proof}
Let $\xi\in\dom,$ by linear change of coordinates $z' =
(z-\xi)\Phi(\xi)$ we could obtain the following form for function
$\rho$ 
\begin{multline*}
\rho(z) = \rho(\xi+ \Phi^{-1}(\xi)z')\\ = 2\RE(z'_n) + \suml_{j,k=1}^n A^1_{jk}(\xi) z'_j\bar{z}'_k  + \RE\suml_{j,k=1}^n A_{jk}^2(\xi) z'_j z'_k  +O(\abs{z'}^3).
\end{multline*}
Setting $z''_n =z'_n +  A_{jk}^2 z'_j z'_k$ and $z''_j = z'_j,\
1\leq j\leq n-1,$ we have (see \cite{Ra86})
$$\rho(z'')= 2\RE(z'_n) + \suml_{j,k=1}^n A^1_{jk}(\xi) z''_j\bar{z}''_k   +O(\abs{z''}^3).  $$
Denote $B(\xi) = \Phi(\xi)^\perp A^2(\xi) \Phi(\xi),$ then
$$\varphi(\xi,z) = \Phi(\xi) (z-\xi) + (z-\xi)^\perp B(\xi)
(z-\xi) e_n.$$

We choose $\Gamma_j$ such that the matrix $\Phi(\xi)$ could be
defined on $\Gamma_j$ smoothly, this choice we denote as $\Phi_j,$
and the change corresponding to this matrix as $\varphi_j$
$$\varphi_j(\xi,z) = \Phi_j(\xi) (z-\xi) + (z-\xi)^\perp B_j(\xi)
(z-\xi)e_n.$$ Thus mappings $\varphi_j$ satisfy the first condition.
Easily, the second condition also holds.

The last condition (\ref{thm:rhostandart:cond3}) follows immediately from lemma~\ref{lemma:rhostandart}. This ends the
proof of the theorem. \qed
\end{proof}

Further we will assume, that the covering
$\overline{\O}_\eps\setminus\O_{-\eps} \subset
\bigcup\limits_{j=1}^N\Gamma_j$ and maps $\varphi_j, \psi_j$ are
chosen by the theorem~\ref{thm:rhostandart}. For covering
$\{\Gamma_j\}$ we consider a smooth decomposition of identity on
$\dom:$
$$\chi_j \in C^{\infty}(\Gamma_j),\ 0\leq\chi_j\leq 1,\ \supp{\chi_j}\subset\Gamma_j,\ \suml_{j=1}^N \chi_j(z) = 1,\ z\in\dom.$$

Fix parameters $0<\eps,\eta<\eps_0,$ denote $D_0 =D_0(\eta,\eps).$ Then by
(\ref{thm:rhostandart:cond3}) 
$$D^e(z)=\varphi_j(z,D^e(z,\eta/c,\eps/c))\subset D_0$$ and
\begin{multline} \label{eq:Luzin_decomposition}
I_l (g,z)^2\\ = \suml_{j=1}^N \chi_j(z)
\intl_{D^e(z)} \abs{\ \intl_\dom \frac{
g(w)J_j(z,\tau) dS(w)}{\V{\psi_j(z,\tau)}{w}^{n+l}} }^2
\frac{d\mu(\tau)}{\RE(\tau_n)^{n-2l+1}} \\
 \lesssim \suml_{j=1}^N  \intl_{D_0} \abs{\ \intl_\dom \frac{ g(w)\chi_j^{1/2}(z) J_j(z,\tau)
dS(w)}{\V{\psi_j(z,\tau)}{w}^{n+l}} }^2
\frac{d\mu(\tau)}{\RE(\tau_n)^{n-2l+1}}.
\end{multline}

\noindent We will consider the function
\begin{equation}
 K_j(z,w) (\tau) = \frac{\chi_j^{1/2}(z)J_j(z,\tau)}{\V{\psi_j(z,\tau)}{w}^{n+1}}
\end{equation}
as a map $\dom\times\dom\to \mathscr{L}(\C,L^2(D_0,d\nu_l)),$ such
that its values are operator of multiplication from $\C$ to
$L^2(D_0,d\nu_l),$ where
$d\nu_l(\tau)=\frac{d\mu(\tau)}{\IM(\tau_n)^{n-2l+1}}$ is a measure
on the region $D_0.$ Throughout the proof of the
theorem~\ref{thm:area_int} $j,l$ will be fixed integers and the norm
of function $F$ in the space $L^2(D_0,d\nu_l)$ will be denoted as~$\norm{F}.$

We will show that integral operator defined by kernel $K_j$ is
bounded on $L^p.$ To prove this we apply $T1$-theorem for
transformations with operator-valued kernels formulated by
Hyt\"{o}nen and Weis in \cite{HW05}, taking in account that in our
case concerned spaces are Hilbert. Some details of the proof are
similar to the proof of the boundedness of operator
Cauchy-Leray-Fantappi\`{e} $K_\O$ for lineally convex domains
introduced in \cite{R12}. Below we formulate the $T1$-theorem,
adapted to our context.

\begin{definition}
We say that the function $f\in C^\infty_0(\dom)$ is a normalized
bump-function, associated with the quasiball $B(w_0,r)$ if
$\supp{f}\subset B(z,r),$ $\abs{f}\leq 1,$ and
$$\abs{f(\xi)-f(z)}\leq \frac{d(\xi,z)^{\gamma}}{r^{\gamma}}.$$
The set of bump-functions associated with $B(w_0,r)$ is denoted as
$A(\gamma,w_0,r).$
\end{definition}

\begin{theorem} \label{thm:T1}
Let $K:\dom\times\dom\to \mathscr{L} (\C, L^2(D_0,d\nu_l))$ verify
the estimates
\begin{align}
    &\norm{K(z,w)} \lesssim \frac{1}{d(z,w)^n}; \label{KZ1}\\
    &\norm{K(z,w)-K(\xi,w)} \lesssim \frac{d(z,\xi)^\gamma}{d(z,w)^{n+\gamma}},\quad d(z,w)> C d(z,\xi); \label{KZ2}\\
    &\norm{K(z,w)-K(z,w')} \lesssim \frac{d(w,w')^\gamma}{d(z,w)^{n+\gamma}},\quad d(z,w)> C d(w,w').\label{KZ3}
\end{align}
Assume that operator $T:
\mathscr{S}(\dom)\to\mathscr{S}'(\dom,\mathscr{L} (\C,
L^2(D_0,d\nu_l)))$ with kernel $K$ verify the following conditions.
\begin{itemize}
 \item $T1,\ T'1\in\BMO(\dom,L^2(D_0,d\nu_l)),$ where $T'$ is
 formally adjoint operator.
 \item Operator $T$ satisfies the weak boundedness property, that is
 for every pair of normalized bump-functions $f,g\in A(\gamma,w_0,r)$ we have $$\norm{\scp{g}{Tf}}\leq C
 r^{-n}.$$
\end{itemize}
Then $T\in\mathscr{L} (L^p(\dom), L^p(\dom,L^2(D_0,d\nu_l))$ for
every $p\in (1,\infty).$
\end{theorem}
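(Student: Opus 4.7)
The plan is to deduce the statement as a specialization of the general operator-valued $T1$-theorem established by Hyt\"{o}nen and Weis in~\cite{HW05}. Their result handles singular integral operators from $L^p$ of a space of homogeneous type into $L^p$ with values in a UMD Banach space, and both of these structural hypotheses hold automatically in the present setting, so no genuinely new analysis is required.

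First, I would recall from Section~\ref{CLF} that $(\dom, d, \sigma)$ is a space of homogeneous type: $d$ is a quasimetric and $\sigma$ is doubling because $\sigma(B(z,\delta))\asymp \delta^n$, so the ambient space is exactly of the type covered in~\cite{HW05}. Next, I would observe that the target $L^2(D_0, d\nu_l)$ is a Hilbert space and hence a UMD Banach space with the strongest possible geometric properties; in particular vector-valued BMO and Rademacher-type averages reduce to their scalar analogues via Plancherel and Fubini, which is what makes the Hilbert case of the Hyt\"{o}nen-Weis theorem especially clean.

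Second, I would match up the quantitative hypotheses of the cited theorem. The size and H\"{o}lder estimates (\ref{KZ1})-(\ref{KZ3}) on $\norm{K(z,w)}$ are precisely the operator-norm analogues of the standard Calder\'{o}n-Zygmund kernel conditions on the space of homogeneous type $(\dom,d,\sigma)$; the hypotheses $T1,T'1\in\BMO(\dom,L^2(D_0,d\nu_l))$ are the natural Hilbert-valued BMO conditions; and the weak boundedness property formulated with normalized bump-functions $f,g\in A(\gamma,w_0,r)$ adapted to the quasiballs $B(w_0,r)$ is exactly the WBP used in~\cite{HW05}. Once these identifications are made, their theorem immediately yields boundedness of $T$ from $L^p(\dom)$ to $L^p(\dom,L^2(D_0,d\nu_l))$ for every $p\in(1,\infty)$, which is the claim.

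The main obstacle in such a reduction is purely one of bookkeeping: verifying that the conventions for vector-valued BMO, for the weak boundedness property, and for normalized bump-functions used in~\cite{HW05} agree with those adopted here after specialization to a scalar input and a Hilbert-valued target. Since $L^2(D_0,d\nu_l)$ is Hilbert and the underlying space $(\dom, d, \sigma)$ is doubling with the balls $B(w_0,r)$ playing the role of Euclidean balls, all such identifications are standard and no further modification of the arguments in~\cite{HW05} is needed. The substantive work of the appendix is therefore shifted to later sections, where one actually verifies the three hypotheses (kernel estimates, BMO membership of $T1,T'1$, and WBP) for the concrete kernel $K_j$ defined in~(\ref{eq:Luzin_decomposition}).
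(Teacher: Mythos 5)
Your proposal coincides with the paper's own treatment: the paper gives no independent proof of Theorem~\ref{thm:T1}, but presents it as the operator-valued $T1$-theorem of Hyt\"{o}nen and Weis \cite{HW05} ``adapted to our context,'' exactly as you do, using that the target $L^2(D_0,d\nu_l)$ is Hilbert (hence UMD) and that $(\dom,d,\sigma)$ is a space of homogeneous type, with the substantive work deferred to verifying the hypotheses for the kernels $K_j$. The only caveat---which applies equally to the paper---is that \cite{HW05} is formulated for kernels on Euclidean space, so the transfer of their theorem to the quasimetric space $(\dom,d,\sigma)$ is asserted rather than carried out in detail in both your argument and the paper's.
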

\medskip

In the following three lemmas we will prove that kernels $K_j$ and
corresponding operators $T_j$ satisfy the conditions of the
$T1$-theorem.

\begin{lemma} \label{lm:T1_1} The kernel $K_j$ verify estimates (\ref{KZ1}-\ref{KZ3}).

\end{lemma}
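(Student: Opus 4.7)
The plan is to verify (KZ1)--(KZ3) by pointwise control of $K_j(z,w)(\tau)$ (and its differences in $z$ or $w$) followed by polar integration over $D_0$ to pass to the $L^2(D_0,d\nu_l)$ norm. Three ingredients are used throughout. First, property~(3) of Theorem~\ref{thm:rhostandart} places $\psi_j(z,\tau)$ in $D^e(z,c\eta,c\eps)$, and because $\varphi_j$ puts $\rho$ into the standard form we have $\rho(\psi_j(z,\tau))\asymp\RE(\tau_n)$; Lemma~\ref{lm:QM_est2} then yields
$$\abs{\scp{\partial\rho(\psi_j(z,\tau))}{\psi_j(z,\tau)-w}}\asymp \RE(\tau_n)+d(z,w).$$
Second, the polar integration rule from Section~\ref{CLF}, transported to $D_0$ via $\varphi_j$, reduces any $d\mu$-integral of a function of $\RE(\tau_n)$ to a one-dimensional integral against $t^n\,dt$. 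Third, property~(2) of Theorem~\ref{thm:rhostandart} gives Lipschitz-in-$z$ bounds on $J_j$ and $\psi_j$, while $\chi_j$ and $\partial\rho$ are $C^1$-smooth.

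For (KZ1), boundedness of $\abs{J_j}$ together with the denominator estimate gives the pointwise bound $\abs{K_j(z,w)(\tau)}\lesssim(\RE(\tau_n)+d(z,w))^{-(n+l)}$; squaring, applying the polar rule, and splitting the resulting integral at $t=d(z,w)$ yield
$$\norm{K_j(z,w)}^2\lesssim \intl_0^\eps \frac{t^{2l-1}\,dt}{(t+d(z,w))^{2n+2l}}\lesssim d(z,w)^{-2n},$$
which is (KZ1).

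For (KZ2), decompose $K_j(z,w)-K_j(\xi,w)$ into three telescoping pieces corresponding to the differences of $\chi_j^{1/2}$, of $J_j$, and of $\Delta^{-(n+l)}$, where $\Delta(z)=\scp{\partial\rho(\psi_j(z,\tau))}{\psi_j(z,\tau)-w}$. Each factor is controlled by $\abs{z-\xi}$ via property~(2) of Theorem~\ref{thm:rhostandart} and the $C^3$-smoothness of $\rho$; under the hypothesis $d(z,w)>Cd(z,\xi)$ one has $\abs{\Delta(\xi)}\asymp\abs{\Delta(z)}\asymp\RE(\tau_n)+d(z,w)$. The main obstacle is that the crude estimate $\abs{z-\xi}\lesssim d(z,\xi)^{1/2}$ combined with a denominator of order $d(z,w)^{n+l+1}$ is too weak to produce the required form $d(z,\xi)^\gamma/d(z,w)^{n+\gamma}$. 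To gain a half-power of $d(z,w)$ one splits $z-\xi$ into its complex-normal and complex-tangential components at $\dom$: the normal component is $\lesssim d(z,\xi)$ and pairs with a denominator of order $(\RE(\tau_n)+d(z,w))^{n+l+1}$, while the tangential component is $\lesssim d(z,\xi)^{1/2}$ and pairs with the smaller order $(\RE(\tau_n)+d(z,w))^{n+l+1/2}$ (tangential differentiation of the Cauchy--Leray kernel gains a half-power). Under $d(z,w)>Cd(z,\xi)$ both contributions reduce to $d(z,\xi)^{1/2}(\RE(\tau_n)+d(z,w))^{-(n+l+1/2)}$, and the polar rule then gives
$$\norm{K_j(z,w)-K_j(\xi,w)}\lesssim \frac{d(z,\xi)^{1/2}}{d(z,w)^{n+1/2}},$$
which is (KZ2) with $\gamma=1/2$. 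The proof of (KZ3) follows the same template but is simpler, since only the denominator depends on $w$ and it is polynomial in $w$; the analogous tangential/normal splitting of $w-w'$ delivers $\gamma=1/2$. These computations closely parallel the boundedness argument for the Cauchy--Leray--Fantappi\`e operator $K_\O$ in~\cite{R12}.
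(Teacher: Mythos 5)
Your proposal follows essentially the same route as the paper: pointwise control of the kernel via Lemma~\ref{lm:QM_est2} (so that $\abs{\V{\psi_j(z,\tau)}{w}}\asymp\RE(\tau_n)+d(z,w)$), reduction of the $L^2(D_0,d\nu_l)$ norms to one-dimensional integrals by the polar rule, and the anisotropic half-power gain yielding (\ref{KZ2})--(\ref{KZ3}) with $\gamma=1/2$, exactly as in the paper's proof. The only difference is presentational: where you invoke a normal/tangential splitting of $z-\xi$ heuristically, the paper carries out this gain explicitly through the quadratic form of the coordinate change, expanding $\psi_j(z,\tau)-\psi_j(\xi,\tau)=z-\xi+(\Psi(z)-\Psi(\xi))\tau+L(z,\xi,\tau)e_n$ and bounding the remainder $\abs{L(z,\xi,\tau)}\lesssim d(z,\xi)^{1/2}\abs{\tau}$ together with $\abs{\tau}\lesssim(\RE\tau_n)^{1/2}$ — the same mechanism you describe, so no essential gap.
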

\begin{proof} By lemma \ref{lm:QM_est2} we have 
$\abs{\V{\tau}{w}}\asymp\rho(\tau) + \abs{\V{z}{w}},$ $z,w\in\dom,\
\tau\in D^e(z,c\eta,c\eps). $
Thus
\begin{multline*}
\norm{ K_j(z,w)}^2 = \intl_{D_0} \abs{K_j(z,w)(\tau)}^2 d\nu_l(\tau)
\lesssim\intl_{D^e(z,c\eta,c\eps)}
\frac{d\nu_l(\tau)}{ \abs{ \V{\tau}{w} }^{2n+2l} } \\
 \lesssim\intl_{D^e(z,c\eta,c\eps)}\frac{1}{( \rho(\tau) + \abs{\V{z}{w}} )^{2n+2l}}
\frac{d\mu(\tau)}{\rho(\tau)^{n-2l+1}}\\
\lesssim \intl_0^\infty\frac{t^{2l-1} dt}{(t+\abs{\V{z}{w}})^{2n+2l}} \lesssim
\frac{1}{\abs{\V{z}{w}}^{2n}}\lesssim \frac{1}{d(z,w)^{2n}}.
 \end{multline*}

Similarly,
\begin{multline*}\norm{K_j(z,w) - K_j(z,w')}^2\\
\lesssim\intl_{D^e(z,c\eta,c\eps)} \abs{\frac{1}{\V{\tau}{w}^{n+l}} -
\frac{1}{\V{\tau}{w'}^{n+l}}}^2 d\nu_l(\tau)
\end{multline*}

%

\noindent Denote $\hat{\tau} =\pr_{\dom}(\tau),$ then
\begin{multline*} \abs{\V{\tau}{w}} \lesssim \rho(\tau) +
\abs{\V{\hat{\tau}}{w}}\\ \lesssim \rho(\tau) +  \abs{\V{z}{w}} +
\abs{\V{\hat{\tau}}{z}} \lesssim \rho(\tau) + \abs{\V{z}{w}},
\end{multline*}
which combined with lemma \ref{lm:QM_est2} and condition $$d(w,w')=\abs{\V{w}{w'}}<
C \abs{\V{z}{w}}=C d(z,w)$$ implies
\begin{multline*}
\abs{\V{\tau}{w}} \asymp \rho(\tau) +  \abs{\V{z}{w}} \asymp \rho(\tau) +  \abs{ \V{z}{w'} }\\ \asymp \abs{\V{\tau}{w'}}.  
\end{multline*}
Next, we have 
\begin{multline*}
 \abs{ \V{\tau}{w'}-\V{\tau}{w} } =  \abs{ \scp{\partial\rho(\tau)}{\hat{\tau}-w}-\scp{\partial\rho(\tau)}{\hat{\tau}-w'} }\\
  \leq \abs{ \scp{\partial\rho(\tau)-\partial\rho(\hat{\tau})}{w-w'} } +
  \abs{ \scp{\partial\rho(\hat{\tau})}{\hat{\tau}-w}-\scp{\partial\rho(\hat{\tau})}{\hat{\tau}-w'} }\\
  \lesssim \rho(\tau) \abs{\V{w}{w'}}^{1/2} +
  \abs{\V{\hat{\tau}}{w}}^{1/2}\abs{\V{w}{w'}}^{1/2}\\ \lesssim   \abs{\V{\tau}{w}}^{1/2} \abs{\V{w}{w'}}^{1/2}
\end{multline*}
Hence,
\begin{multline*}\norm{K_j(z,w) - K_j(z,w')}^2\lesssim
\intl_{D^e(z,c\eta,c\eps)} \frac{ \abs{\V{w}{w'}} }{ \abs{\V{\tau}{w}}^{2n+2l+1} } d\nu_l(\tau)\\
\lesssim  \intl_0^\infty \frac{\abs{\V{w}{w'}} t^{2l-1}
dt}{(t+\abs{\V{z}{w}})^{2n+2l+1}} \lesssim \frac{ \abs{\V{w}{w'}} }{
\abs{ \V{z}{w} }^{2n+1} }=\frac{d(w,w')}{d(z,w)^{n+1}}.
\end{multline*}

The last inequality (\ref{KZ3}) is a bit harder to prove. 

Let $z,\xi,w\in\dom,\ Cd(z,\xi)<d(z,w),$ and estimate the value
$$A = \abs{\V{\psi_j(z,\tau)}{w} - \V{\psi_j(\xi,\tau)}{w}  }. $$
Denote $\tau_z = \psi_j(z,\tau),\ \tau_\xi =\psi_j(\xi,\tau),$ then by (\ref{thm:rhostandart:cond1})
\begin{multline*}
\tau = \Phi(z)(\tau_z-z)+i (\tau_z-z)^{T}B(z)(\tau_z-z)e_n\\ = \Phi(\xi)(\tau_\xi-\xi)+i
(\tau_\xi-\xi)^{T}B(\xi)(\tau_\xi-\xi)e_n,
\end{multline*} whence denoting $\Psi(z) = \Phi(z)^{-1}$ and introducing $L(z,\xi,\tau)$ we obtain
\begin{align*}
\tau_z &= z+ \Psi(z)\tau -  (\tau_z-z)^{T}B(z)(\tau_z-z)\Psi(z)e_n ,\\
\tau_\xi &= \xi+ \Psi(\xi)\tau - (\tau_\xi-\xi)^{T}B(\xi)(\tau_\xi-\xi)\Psi(\xi)e_n ,\\
\tau_z-\tau_\xi &= z-\xi + (\Psi(z)-\Psi(\xi))\tau + L(z,\xi,\tau)e_n .
\end{align*}
Note, that norms of matrices $\norm{\Psi(\xi)}$ are bounded, thus
\begin{multline*}
 \abs{L(z,\xi,\tau)} \leq \abs{(\tau_z-z)^{T}B(z)(\tau_z-z)(\Psi(z)-\Psi(\xi))}\\ + \abs{(\tau_z-z)^{T}B(z)(\tau_z-z) -
 (\tau_\xi-\xi)^{T} B(\xi)(\tau_\xi-\xi)} \norm{\Psi(\xi)}\\
 \lesssim \abs{z-\xi}\abs{\tau_z-z}^2 + \abs{(\tau_z-z-\tau_\xi+\xi)^T
 B(z)(\tau_z-z)}\\
 + \abs{(\tau_\xi-\xi)^T B(z)(\tau_z-z) - (\tau_\xi-\xi)^T B(\xi)(\tau_\xi-\xi)}\\
  \lesssim \abs{z-\xi}\abs{\tau_z-z}^2 + \abs{z-\xi}\abs{\tau} + \abs{ ((\Psi(z)-\Psi(\xi))\tau+L(z,\xi,\tau)e_n)^T
  B(z)(\tau_z-z)}\\ + \abs{(\tau_\xi-\xi)^T(B(z)-B(\xi))(\tau_z-z)} + \abs{(\tau_\xi-\xi)^T B(\xi)
  (\tau_z-z-\tau_\xi-\xi)}\\
  \lesssim \abs{z-\xi}\abs{\tau_z-z}^2 + \abs{z-\xi}\abs{\tau} +\abs{\tau}
  \abs{L(z,\xi,\tau)}
  +\abs{z-\xi}\abs{\tau}^2 + \abs{\tau} L(z,\xi,\tau).
\end{multline*}

Choosing $\eps>0$ small enough we get 
$\abs{\tau}\leq\eta\abs{\IM(\tau_n)}+(1+\eta)\abs{\IM(\tau_n)}\leq3{\eps}$
and
$\abs{L(z,\xi,\tau)} \lesssim d(z,\xi)^{1/2}\abs{\tau},$ for $\tau\in D_0=D_0(\eta,\eps).$ Hence,
\begin{multline*}
 A\leq
 \abs{\scp{\partial\rho(\tau_z)-\partial\rho(\tau_\xi)}{\tau_z-w}} +
 \abs{\scp{\partial\rho(\tau_\xi)}{\tau_z-w}}\\
 \lesssim \abs{\tau_z-\tau_\xi}(\rho(\tau_z)+d(z,w)^{1/2}) +
 \abs{\scp{\partial\rho(\tau_z)-\partial\rho(\tau_\xi)}{z-\xi}} +
 \abs{\V{z}{\xi}}\\
 + \abs{\scp{\partial\rho(\tau_\xi)}{(\Psi(z)-\Psi(\xi))\tau}} + \abs{\scp{\partial\rho(\tau_\xi)}{L(z,\xi,\tau)}} \lesssim d(z,\xi)^{1/2}d(\tau_z,w) +\\
 \abs{\tau_z-\xi}\abs{z-\xi}+d(z,\xi)+\abs{z-\xi}\abs{\tau}+\abs{L(z,\xi,\tau)} \lesssim d(z,\xi) + d(z,\xi)^{1/2}d(z,w)^{1/2}\\
 \lesssim  d(z,\xi)^{1/2}d(z,w)^{1/2}
\end{multline*}
Combining this estimate with inequality
$\abs{\V{\tau_z}{w}}\asymp\abs{\V{\tau_\xi}{w}}$ we obtain
\begin{multline*}
\norm{K_j(z,w) - K_j(\xi,w)}^2 \lesssim
 \intl_{D^e(z,c\eta,c\eps)}
\frac{\abs{\chi_j(z)^{1/2}-\chi_j(\xi)^{1/2}}^2}{\abs{\V{\tau}{w}}^{2n+2l}} \frac{d\mu(\tau)}{\rho(\tau)^{n-2l+1}}\\
+ \chi_j(\xi) \intl_{D_0}
\frac{ \abs{\V{z}{\xi}}\abs{\V{z}{w}} }{ \abs{\V{\tau_z}{w}}^{2n+4} }\frac{d\mu(\tau)}{\RE(\tau_n)^{n-2l+1}}\\
\lesssim \frac{ \abs{\V{z}{\xi}} }{ \abs{\V{z}{w}}^{2n} } + \frac{
\abs{\V{z}{\xi}} }{ \abs{\V{z}{w}}^{2n+1} } \lesssim \frac{
\abs{\V{z}{\xi}} }{ \abs{\V{z}{w}}^{2n+1} }\\ 
\lesssim \frac{d(z,\xi)}{d(z,w)^{2n+1}}.
\end{multline*}\qed
\end{proof}

\begin{lemma} \label{lm:T1_2} $T_j(1) =0$ and $\norm{T_j'(1)} \lesssim 1.$
\end{lemma}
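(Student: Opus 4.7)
My plan is to analyse the kernel $K_j$ of $T_j$ and of its formal adjoint $T_j'$, using lemma~\ref{lm:QM_est2} and the fact that $(\dom,d,\sigma)$ is a space of homogeneous type with $\sigma(B(\cdot,\delta))\asymp\delta^n$.

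For the identity $T_j(1)=0$, substituting $f\equiv 1$ into $T_j$ yields
$$(T_j 1)(z)(\tau) \;=\; \chi_j^{1/2}(z)\,J_j(z,\tau)\intl_\dom\frac{dS(w)}{\scp{\partial\rho(\zeta)}{\zeta-w}^{n+l}}, \qquad \zeta:=\psi_j(z,\tau)\in\Cn\setminus\overline\O.$$
Because $\O$ is convex and $\zeta$ is exterior, the denominator $\scp{\partial\rho(\zeta)}{\zeta-w}$ is non-vanishing for $w\in\overline\O$, so the integrand is holomorphic in $w$ in a neighbourhood of $\overline\O$. I then invoke the exterior vanishing of the Cauchy--Leray--Fantappi\`e representation: formula (\ref{eq:CLF}) holds for $z\in\O$, while the analogous integral vanishes for $z\notin\overline\O$ by analyticity and Stokes. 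Differentiating this identity in $\zeta$ the appropriate number of times, and redistributing the derivatives onto $\omega$ via Stokes, identifies the inner integral above with an $l$-th directional derivative of the exterior-vanishing CLF integral, yielding $T_j(1)=0$ in the $\BMO(\dom,L^2(D_0,d\nu_l))$-sense required by the $T1$-theorem.

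For $\norm{T_j'(1)}\lesssim 1$: the formal adjoint has kernel $\overline{K_j(w,z)(\tau)}$ with the roles of $z$ and $w$ swapped. Applying $T_j'$ to the constant $1\in L^2(D_0,d\nu_l)$ and reversing the $\varphi_j$-change of variables gives
$$\norm{(T_j' 1)(w)}_{L^2(D_0,d\nu_l)}^2 \;\lesssim\; \intl_{D^e(w,c\eta,c\eps)}\abs{\intl_\dom \frac{dS(z)}{\scp{\partial\rho(\tau)}{\tau-z}^{n+l}}}^{\!2} d\nu_l(\tau).$$
By lemma~\ref{lm:QM_est2}, $\abs{\scp{\partial\rho(\tau)}{\tau-z}}\asymp \rho(\tau)+d(\hat\tau,z)$ with $\hat\tau=\pr_{\dom}(\tau)$; the standard homogeneous-type bound then yields $\intl_\dom dS(z)/(\rho(\tau)+d(\hat\tau,z))^{n+l}\lesssim \rho(\tau)^{-l}$. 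Plugging this in, using $d\nu_l(\tau)=d\mu(\tau)/\rho(\tau)^{n-2l-1}$ and the first integration rule after (\ref{df:KoranyExt}), the bound reduces to $\lesssim\int_0^{c\eps} t\,dt\lesssim 1$, uniformly in $w$.

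The main obstacle is the first claim: the inner integral is not literally zero for generic exterior $\zeta$, so $T_j(1)=0$ must be interpreted in the $\BMO$-mod-constants sense of the $T1$-theorem, with the CLF exterior vanishing supplying the necessary cancellation.
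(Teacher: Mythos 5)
Your first claim is where the proposal breaks down. Write $\tau_z=\psi_j(z,\tau)$. The paper's argument for $T_j(1)=0$ lives entirely in the $w$-variable: $w\mapsto\V{\tau_z}{w}$ is affine, holomorphic and zero-free on a neighbourhood of $\overline{\O}$, and the conclusion is drawn by a closedness/Stokes argument for the form $\V{\tau_z}{w}^{-(n+l)}dS(w)$ over $\O$. Your mechanism is different and does not work: differentiating the CLF kernel of (\ref{eq:CLF}) in the pole variable produces $\scp{\partial\rho(w)}{w-\zeta}^{-(n+k)}$ with the gradient still attached to the integration variable $w$, which is \emph{not} the kernel $\V{\tau_z}{w}^{-(n+l)}$ occurring in $T_j1$; and the ``exterior vanishing of the CLF integral'' you invoke is not available, because for $z\notin\overline{\O}$ the denominator $\scp{\partial\rho(w)}{w-z}$ vanishes at every $w\in\dom$ whose complex tangent plane $T_w$ contains $z$, so that integral is not even absolutely convergent, let alone zero by ``analyticity and Stokes''. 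Your fallback --- conceding the inner integral is not literally zero and appealing to ``BMO mod constants'' --- is not a proof: $\Phi(\tau_z)=\intl_\dom\V{\tau_z}{w}^{-(n+l)}dS(w)$ depends on both $z$ and $\tau$, so it is not a constant BMO forgives, and the $T1$ hypothesis requires an actual quantitative bound on $\norm{T_j1(z)}_{L^2(D_0,d\nu_l)}$. (Your suspicion is, in fact, well founded: on the ball the Leray measure is a multiple of surface measure and the mean value property gives $\Phi(\zeta)=c\abs{\zeta}^{-2(n+l)}\neq 0$, so exact vanishing needs more care than the closedness asserted in the paper; but then the repair is a bound such as $\abs{\Phi(\tau_z)}\lesssim\RE(\tau_n)^{1-l}\log(1/\RE(\tau_n))$, obtained by Stokes to a volume integral over $\O$ exactly as in the paper's $L_2$ step for the adjoint --- nothing of this kind appears in your sketch.)

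The second half also has a genuine gap. What you display as $T_j'(1)$ is, up to relabelling, $T_j(1)$ again: your integrand $\scp{\partial\rho(\tau)}{\tau-z}^{-(n+l)}$ has the gradient at a fixed exterior point and integrates over the slot where the kernel is holomorphic. The true adjoint is $T_j'(1)(w)(\tau)=\intl_\dom\chi_j^{1/2}(z)J_j(z,\tau)\V{\psi_j(z,\tau)}{w}^{-(n+l)}dS(z)$: the integration is in $z$, the target $w$ is fixed, and both the density and the exterior point $\psi_j(z,\tau)$ move with $z$ (the change of variables $\varphi_j$ acts in the fibre variable $\tau$, not in $z$, so it cannot be ``reversed'' to give your formula). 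Moreover, even granting a reduction, you use only absolute values, i.e.\ the size bound $\RE(\tau_n)^{-l}$; your integral closes only because you took the exponent $n-2l-1$ from (\ref{eq:area_int}), whereas the measure actually used throughout the appendix proofs --- and the one forced by the application in theorem~\ref{thm:PAC_Sobolev} --- is $d\mu(\tau)/\RE(\tau_n)^{n-2l+1}$, against which the size-only bound gives $\int_0^\eps dt/t=\infty$. The whole content of the paper's estimate is the gain of a factor $\RE(\tau_n)$ (up to a logarithm) through cancellation: splitting $dS(z)=(dS(z)-dS(\tau_z))+dS(\tau_z)$, using $\abs{z-\tau_z}\lesssim\RE(\tau_n)$ for the first piece, and applying Stokes to the nearly closed form $dS(\xi)/\V{\xi}{w}^{n+l}$ for the second, yielding $\abs{T_j'(1)}\lesssim\RE(\tau_n)^{1-l}\log(1+1/\RE(\tau_n))$. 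This cancellation step is missing from your proposal, so neither assertion of the lemma is established.
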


\begin{proof}
Introduce the notation $\tau_z = \psi_j(z,\tau).$ The function
$\V{\tau_z}{w}$ is holomorphic in $\O$ with respect to $w,$ then the
form $\V{\tau_z}{w}^{-n-l}dS(w)$ is closed in $\O$ and
$$T_j(1)(\tau) = \chi_j(z)^{1/2} J_j(z,\tau) \intl_\dom \frac{dS(w)}{\V{\tau_z}{w}^{n+l}} = 0. $$
It remains to estimate the value of formally-adjoint operator $T'$
on $f\equiv1$.
\begin{multline*}
 T'_j(1)(w)(\tau)= \intl_\dom \frac{\chi_j(z)^{1/2}
J_j(z,\tau) dS(z)}{\V{\tau_z}{w}^{n+l}}\\ = \intl_\dom
\frac{\chi_j(z)^{1/2} J_j(z,\tau)
(dS(z)-dS(\tau_z))}{\V{\tau_z}{w}^{n+l}} + \intl_\dom
\frac{\chi_j(z)^{1/2} J_j(z,\tau) dS(\tau_z)}{\V{\tau_z}{w}^{n+l}} =
L_1+L_2.
\end{multline*}
Note that $ \abs{z-\tau_z}\lesssim \RE(\tau_n),$ therefore
$\abs{dS(z)-dS(\psi(z,\tau))} \lesssim \RE(\tau_n)d\sigma(z)$ and
\begin{multline*}
\abs{L_1} \lesssim \intl_\dom \frac{\RE(\tau_n)d\sigma(z)}{
\abs{\V{\tau_z}{w}}^{n+l} } \lesssim
\frac{\RE(\tau_n)d\sigma(z)}{(\RE(\tau_n)+ \abs{\V{z}{w}})^{n+l} }\\
\lesssim \intl_0^\infty \frac{\RE(\tau_n)
v^{n-1}dv}{(\RE(\tau_n)+v)^{n+l}}\lesssim\frac{1}{\RE(\tau_n)^{l-1}}.
\end{multline*}
Thus we get
\begin{equation} \label{T'1_I1}
\intl_{D_0} \abs{L_1}^2 d\nu_l(\tau) \lesssim \intl_{D_0}
\frac{1}{\RE(\tau_n)^{2l-2}}\frac{d\mu(\tau)}{\RE(\tau_n)^{n-2l+1}}
\lesssim\intl_0^{\eps} \frac{t^ndt}{t^{n-1}} \lesssim 1
\end{equation}
To estimate $L_2$ we recall that $d_\xi
\frac{dS(\xi)}{\V{\xi}{z}^{n}} =0,\ z\in\dom,\ \xi\in\CO,$  and
consequently
\begin{multline*}
d \frac{dS(\xi)}{\V{\xi}{z}^{n+l}} =
\frac{(\dbar\partial\rho(\xi))^n}{\V{\xi}{z}^{n+l}}\\
 - (n+l)\frac{(\dbar_\xi\left(\V{\xi}{z}\right)\wedge\dbar\partial\rho(\xi))^{n-1}}{\V{\xi}{z}^{n+l}}
= -\frac{l}{n} \frac{dV(\xi)}{\V{\xi}{z}^{n+l}}.
\end{multline*}
By Stokes' theorem we obtain
\begin{multline*}
L_2= \intl_\dom \frac{\chi_j(z)^{1/2} J_j(z,\tau)
dS(\tau_z)}{\V{\tau_z}{w}^{n+l}}\\ = \intl_{\O_{\eps_1}\setminus\O}
\frac{\dbar_z\left(\chi_j(z)^{1/2} J_j(z,\tau)\right)\wedge
dS(\tau_z)}{\V{\tau_z}{w}^{n+l}} -\frac{l}{n}
\intl_{\O_{\eps_1}\setminus\O} \frac{\chi_j(z)^{1/2} J_j(z,\tau)
dV(\tau_z)}{\V{\tau_z}{z}^{n+l}}
\end{multline*}
Analogously to lemma \ref{lm:QM_est2} we have $\abs{\V{\tau_z}{w}}
\asymp \IM(\tau_n) + \rho(z) + \abs{\V{\hat{z}}{w}},$ where $\hat{z}=\pr_{\dom}(z).$ Hence,
\begin{multline*}
 \abs{L_2}\lesssim \intl_{\O_{\eps_1}\setminus\O}
\frac{d\mu(z)}{ \abs{\V{\tau_z}{w}}^{n+l} }\\
\lesssim\intl_0^{\eps}
dt\intl_{\dom_t}\frac{d\sigma_t}{(t+\IM(\tau_n)+ \abs{\V{\hat{z}}{w}} )^{n+l}}\\
\lesssim \intl_0^{\eps} dt \intl_0^\infty
\frac{v^{n-1}dv}{(t+\RE(\tau_n)+v)^{n+l}}\lesssim \intl_0^{\eps}
\frac{dt}{(t+\RE(\tau_n))^l}\\ \lesssim (\RE(\tau_n))^{1-l}
\ln{\left(1+\frac{1}{\RE(\tau_n)}\right)},
\end{multline*}
and
\begin{multline*}
\intl_{D_0} \abs{L_2}^2d\nu_l(\tau) \lesssim \intl_{D_0} (\RE(\tau_n))^{2-2l}\ln^2\left(1+\frac{1}{\RE(\tau_n)}\right)d\nu_l(\tau)\\ \lesssim\intl_0^{\eps} \ln^2{\left(1+\frac{1}{s}\right)} s ds\lesssim 1,
\end{multline*}
which with the estimate (\ref{T'1_I1}) completes the proof of the
lemma. \qed

\end{proof}

\begin{lemma} \label{lm:T1_3} Operator $T_j$ is weakly bounded.
\end{lemma}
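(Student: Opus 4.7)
The plan is to verify the weak boundedness property by combining the cancellation $T_j(1) = 0$ established in lemma~\ref{lm:T1_2} with the kernel size estimate from lemma~\ref{lm:T1_1}, in the spirit of the classical Calder\'{o}n--Zygmund argument for operators satisfying $T1 = 0$, here carried out in the $L^2(D_0, d\nu_l)$-valued setting.

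Given normalized bump functions $f, g \in A(\gamma, w_0, r)$, both supported in $B(w_0, r)$, I would first exploit $T_j(1)(z) = 0$ in $L^2(D_0, d\nu_l)$ to rewrite
$$\scp{g}{T_j f} = \intl_\dom \intl_\dom \overline{g(z)}\bigl(f(w) - f(z)\bigr) K_j(z,w)\, d\sigma(w)\, d\sigma(z),$$
the correction term $\intl \overline{g(z)} f(z) (T_j 1)(z)\, d\sigma(z)$ vanishing identically in $L^2(D_0, d\nu_l)$. Minkowski's inequality in $L^2(D_0, d\nu_l)$ together with the size bound $\norm{K_j(z,w)} \lesssim d(z,w)^{-n}$ from lemma~\ref{lm:T1_1} then yields
$$\norm{\scp{g}{T_j f}} \lesssim \intl_\dom \intl_\dom |g(z)|\, \frac{|f(w)-f(z)|}{d(z,w)^n}\, d\sigma(w)\, d\sigma(z).$$

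I would then estimate this double integral by splitting at the scale $d(z,w) = r$. On $\{d(z,w) \leq r\}$ the H\"{o}lder condition $|f(w) - f(z)| \leq r^{-\gamma} d(z,w)^\gamma$ combined with $\sigma(B(z,t)) \asymp t^n$ bounds the inner $w$-integral by $r^{-\gamma}\intl_0^r t^{\gamma - 1}\, dt \asymp 1$. On $\{d(z,w) > r\}$, since $g(z)$ and $f(w)$ force the product of supports into $B(w_0,r) \times B(w_0, r)$, the range of $d(z,w)$ is automatically confined to $(r, cr)$, and using $|f(w)-f(z)| \leq 2$ the contribution is again $O(1)$; the residual ``far tail'' that arises from $z \in \supp f$ and $w \notin B(w_0,r)$ (where $f(w) - f(z) = -f(z)$) is handled by recasting $T_j(1) = 0$ as $\intl_{B(w_0, r)} K_j(z,w)\, d\sigma(w) = -\intl_{\dom \setminus B(w_0, r)} K_j(z,w)\, d\sigma(w)$ and invoking the boundedness of $\dom$. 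Integrating $|g(z)| \leq 1$ over $z \in B(w_0, r)$ delivers a bound of order $\sigma(B(w_0, r)) \asymp r^n$, which is the required weak boundedness estimate.

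The principal subtlety is the operator-valued nature of the Minkowski step, but this is already absorbed into the $L^2(D_0, d\nu_l)$-norm formulation of the size bound in lemma~\ref{lm:T1_1}, so the whole argument reduces to a scalar Calder\'{o}n--Zygmund computation on the space of homogeneous type $(\dom, d, \sigma)$. Together with lemmas~\ref{lm:T1_1} and \ref{lm:T1_2}, the present lemma completes the verification of the hypotheses of theorem~\ref{thm:T1}, which yields the $L^p$-boundedness of $T_j$ and, via the decomposition (\ref{eq:Luzin_decomposition}), the conclusion of theorem~\ref{thm:area_int}.
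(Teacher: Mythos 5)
Your near-diagonal estimate is sound and is essentially a Minkowski-inequality version of the paper's $L_1$ term: on $\{d(z,w)\lesssim r\}$ the H\"older condition on $f$ together with $\norm{K_j(z,w)}\lesssim d(z,w)^{-n}$ and $\sigma(B(z,t))\asymp t^n$ does give an $O(1)$ inner integral, hence a contribution $\lesssim r^n$. The gap is in the far region. First, your claim that the supports confine $d(z,w)$ to $(r,cr)$ is not right: for $z\in\supp f$ and $w\notin B(w_0,r)$ one has $f(w)-f(z)=-f(z)\neq 0$, so the $w$-integral genuinely runs over all of $\dom$, and Minkowski plus the size bound alone yields $\intl_{d(z,w)>Cr} d(z,w)^{-n}\,d\sigma(w)\asymp \log(1/r)$, which destroys the required bound $\lesssim r^n$. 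Your proposed fix --- recasting $T_j(1)=0$ as $\intl_{d(z,w)>Cr}K_j(z,w)\,d\sigma(w)=-\intl_{d(z,w)\leq Cr}K_j(z,w)\,d\sigma(w)$ and ``invoking the boundedness of $\dom$'' --- does not close this, because the quantity you then need, $\norm{\intl_{d(z,w)\leq Cr}K_j(z,w)(\cdot)\,d\sigma(w)}_{L^2(D_0,d\nu_l)}\lesssim 1$ uniformly in $z$, cannot be obtained from the abstract estimates of lemma~\ref{lm:T1_1} either: Minkowski with $\norm{K_j(z,w)}\lesssim d(z,w)^{-n}$ now diverges at the diagonal, $\intl_0^{Cr}t^{-1}dt=\infty$. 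What is needed is cancellation of the kernel integrated in $w$ over a quasiball, quantified pointwise in $\tau$.

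This is exactly the step the paper does not get for free from $T_j1=0$: it estimates $L_2(z,\tau)=\bigl|\intl_{\dom\setminus W(z,\tau,cr)}\frac{dS(w)}{\V{\tau_z}{w}^{n+l}}\bigr|$ by applying Stokes' theorem twice to the form $\frac{dS(w)}{\V{\tau_z}{w}^{n+l}}$, which is closed in $\O$ (the same structural fact that gave $T_j1=0$, but now used on the sublevel/superlevel sets of $\abs{\V{\tau_z}{w}}$). This produces the quantitative bound $L_2(z,\tau)\lesssim r(t+r)^{-l-1}$ with $t\asymp\RE(\tau_n)$, i.e.\ decay both in $r$ and in $\rho(\tau_z)$, and it is this $\tau$-dependent decay, integrated against $d\nu_l$, that yields the $O(1)$ bound (estimate~(\ref{est:I2})) and hence $\norm{\scp{g}{T_jf}}\lesssim r^n$. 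So your outline reduces the problem to a scalar Calder\'on--Zygmund computation at precisely the point where the scalar computation fails logarithmically; to repair it you must replace the ``boundedness of $\dom$'' step by an argument using the concrete holomorphic structure of $\V{\tau_z}{w}^{-(n+l)}$ (Stokes/closedness, as in the paper), not just the kernel estimates and $T_j1=0$.
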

\begin{proof}
 Let $f,g\in A(\frac{1}{2},w_0,r),$ denote again
$\tau_z=\psi_j(z,\tau),$ then
$$\norm{\scp{g}{Tf}}^2 \lesssim \intl_{D_0} d\nu_l(\tau) \left( \intl_{B(w_0,r)} \abs{g(z)} dS(z) \abs{\intl_{B(w_0,r)}\frac{f(w) dS(w)}{\V{\tau_z}{w}^{n+1}}
} \right)^2.$$

Denote $t:=\inf\limits_{w\in\dom} \abs{\V{\tau_z}{w}} $ and
introduce the set
$$W(z,\tau,r) :=\left\{ w\in\dom: \abs{\V{\tau_z}{w}}<t+r\right\}.$$
Note that $B(w_0,r)\subset W(z,\tau,cr) \subset B(z,c^2r)$ for some
$c>0,$ therefore,
\begin{multline*}
\abs{\intl_{B(w_0,r)}\frac{f(w) dS(w)}{\V{\tau_z}{w}^{n+l}} }\\
=\abs{\intl_{W(z,\tau,cr)}\frac{f(w)
dS(w)}{\V{\tau_z}{w}^{n+l}} } \lesssim
\intl_{W(z,\tau,cr)}\frac{\abs{f(z)-f(w)} dS(w)}{
\abs{\V{\tau_z}{w}}^{n+l} }\\ + \abs{f(z)} \abs{\intl_{\dom\setminus
W(z,\tau,cr)}\frac{dS(w)}{\V{\tau_z}{w}^{n+l}} } = L_1(z,\tau) +
\abs{f(z)} L_2(z,\tau).
\end{multline*}
It follows from the estimate $|f(z)-f(w)|\leq \sqrt{v(w,z)/r}$ that
\begin{multline*}
L_1(z,\tau)\lesssim \frac{1}{\sqrt{r}}\intl_{B(z,c^2r)}
\frac{v(w,z)^{1/2}}{(\RE(\tau_n) + v(w,z))^{n+l}}\lesssim
\frac{1}{\sqrt{r}} \intl_0^{c^2r} \frac{t^{n-1/2}dt}{(\RE(\tau_n) + t)^{n+l}}\\
\lesssim \frac{1}{\sqrt{r}} \intl_0^{c^2r}\frac{dt}{(\RE(\tau_n) +
t)^{l+1/2}} \lesssim
\frac{1}{\sqrt{r}}\left(\frac{1}{\RE(\tau_n)^{l-1/2}}-
\frac{1}{(\RE(\tau_n)+r)^{l-1/2}}\right)\\ = \frac{1}{\sqrt{r}}
\frac{(\RE(\tau_n)+r)^{l-1/2} - r^{l-1/2}}{\RE(\tau_n)^{l-1/2}
(\RE(\tau_n)+r)^{l-1/2} } \lesssim \frac{1}{\sqrt{r}}
\frac{(\RE(\tau_n)+r)^{2l-1} - r^{2l-1}}{
\IM(\tau_n)^{l-1/2}(\RE(\tau_n)+r)^{2l-1}}\\ \lesssim
\frac{1}{\sqrt{r}} \frac{r \RE(\tau_n)^{2l-2} +
r^{2l-1}}{\RE(\tau_n)^{l-1/2}(\RE(\tau_n)+r)^{2l-1}}.
\end{multline*}
Estimating the $L^2(D_0,d\nu_l)-$norm of the function $L_1(z,\tau),$
we obtain
\begin{multline} \label{est:I1}
\intl_{D_0(\tau)} L_1(z,\tau)^2 d\nu_l(\tau)\\ \lesssim
\intl_{D_0(\tau)} \left(  \frac{r
\RE(\tau_n)^{2l-3}}{(\RE(\tau_n)+r)^{4l-2}} +
\frac{r^{4l-3}}{\RE(\tau_n)^{2l-1}(\RE(\tau_n)+r)^{4l-2}} \right)
\frac{d\mu(\tau)}{\RE(\tau_n)^{n-2l+1}}\\
\lesssim r \intl_0^\infty \frac{s^{4l-4}}{(s+r)^{4l-2}} ds +
r^{4l-3} \intl_0^\infty \frac{ds}{(s+r)^{4l-2}}\lesssim 1
\end{multline}
To estimate the second summand $L_2$ we apply the Stokes theorem to the
domain
$$\left\{ w\in\O:
\abs{\V{\tau_z}{w}} > t+c r\right\}$$ and to the closed in this
domain form $\frac{dS(w)}{\V{\tau_z}{w}^{n+l}}$
\begin{multline*}
\intl_{\dom\setminus W(z,\tau,cr)}\frac{dS(w)}{\V{\tau_z}{w}^{n+l}}
= - \intl_{\substack{w\in\O\\ \abs{v(\tau_z,w)} = t+cr}
}\frac{dS(w)}{\V{\tau_z}{w}^{n+l}}\\ = -\frac{1}{(t+cr)^{2n+2l}}
\intl_{\substack{w\in\O\\ \abs{v(\tau_z,w)} =
t+cr}}\overline{\V{\tau_z}{w}}^{n+l} dS(w).
\end{multline*}
Applying Stokes' theorem again, now to the domain $$\left\{ w\in\O:
\abs{\V{\tau_z}{w}}<t+cr\right\},$$ we obtain
\begin{multline*}
L_3:=\intl_{\substack{w\in\O\\
\abs{v(\tau_z,w)}=t+cr}}\overline{\V{\tau_z}{w}}^{n+l} dS(w)\\ =
-\intl_{\substack{w\in\dom\\ \abs{v(\tau_z,w)}<t+cr}}\overline{\V{\tau_z}{w}}^{n+l} dS(w)\\
+ \intl_{\substack{w\in\O\\
\abs{v(\tau_z,w)}<t+cr}}
\dbar_w\left(\overline{\V{\tau_z}{w}}^{n+l}\right)\wedge dS(w)\\ +
\intl_{\substack{w\in\O\\
\abs{v(\tau_z,w)}<t+cr}} \overline{\V{\tau_z}{w}}^{n+l} dV(w).
\end{multline*}
Since $ \abs{\
\dbar_w\left(\overline{\V{\tau_z}{w}}^{n+l}\right)\wedge dS(w)}
\lesssim \abs{\V{\tau_z}{w}}^{n+l-1}$ we get
$$
\abs{L_3} \lesssim \intl_t^{t+cr} (s^{n+l}s^{n-1} + s^{n+l}s^{n} +
s^{n+l-1}s^n) ds\lesssim \intl_t^{t+cr} s^{2n+l-1} ds \lesssim r
(t+r)^{2n+l-1}.
$$
Note that $t\asymp \rho(\tau_z)\asymp \IM(\tau_n)$ and consequently
\begin{multline} \label{est:I2}
\intl_{D_0} L_2(z,\tau)^2 d\nu_l(\tau) \lesssim
\intl_{D_0}  \left(\frac{r
(\RE(\tau_n)+r)^{2n+l-1}}{(\RE(\tau_n)+r)^{2n+2l}}\right)^2
d\nu_l(\tau)\\ \lesssim \intl_0^\infty \frac{r^2}{(t+r)^{2l+2}}
\frac{t^n dt}{t^{n-2l+1}} = r^2\intl_0^\infty
\frac{t^{2l-1}}{(t+r)^{2l+2}} \frac{t^n dt}{t^{n-2l+1}} \lesssim r^2
\intl_0^\infty \frac{dt}{(r+t)^3} \lesssim 1.
\end{multline}

Summarizing estimates (\ref{est:I1}, \ref{est:I2}) and condition
$\abs{f(z)}\leq 1,\ z\in\dom,$ we obtain
\begin{multline*}
\norm{\scp{g}{Tf}}^2 \leq \intl_{D_0} d\nu_l(\tau) \left(
\intl_{B(w_0,r)} \abs{g(z)} (L_1(z,\tau) + L_2(z,\tau)|f(z)|) dS(z)\right)^2 \\
\lesssim \norm{g}_{L^1(\dom)}^2
\sup\limits_{z\in\dom}\intl_{D_0}\left(L_1(z,\tau)^2 + L_2(z,\tau)^2\right)
d\nu_l(\tau)\\ \lesssim \norm{g}_{L^1(\dom)}^2\lesssim
\abs{B(w_0,r)}^2.
\end{multline*}
The last estimate implies weak boundedness of operator $T$ and
completes the proof of the lemma. \qed
\end{proof}

\begin{proof}[of the theorem \ref{thm:area_int}] Since operators $T_j$ with kernels $K_j$
verify the conditions of $T1$-theorem, we have $T_j\in\mathscr{L}
(L^p(\dom), L^p(\dom,L^2(D_0,d\nu_l))$ and
\begin{multline*}
 \suml_{j=1}^N \intl_\dom \norm{T_j g(z)}^p dS(z)\\ =
\suml_{j=1}^N \intl_\dom dS(z) \left(\ \intl_{D_0} \abs{\ \intl_\dom
\frac{ g(w)\chi_j^{1/2}(z) J_j(z,\tau)
dS(w)}{\V{\psi_j(z,\tau)}{w}^{n+1}} }^2
\frac{d\mu(\tau)}{\RE(\tau_n)^{n-1}}\right)^p\\ \lesssim
\norm{g}^p_{L^p(\dom)}.
\end{multline*}
Thus by decomposition (\ref{eq:Luzin_decomposition}) $\intl_\dom
I_l(g,z)^p\ d\sigma(z) \lesssim \intl_\dom \abs{g(z)}^p\ d\sigma(z),$
which proves the theorem. \qed
\end{proof}

\end{appendix}

\section*{References}

\end{document}